%% file: main.tex
\documentclass[oneside]{amsart}
\usepackage[color=blue!20!white,textsize=tiny]{todonotes}

\usepackage{bm}
\usepackage{multicol}

\usepackage{mathrsfs}
\usepackage{amsmath}
\usepackage{amsthm}
\usepackage{amssymb} 
\usepackage{amsfonts}
\usepackage{tikz-cd}
\usepackage{graphicx}
\usepackage{xcolor}
\definecolor{darkgreen}{rgb}{0,0.5,0}
\usepackage[normalem]{ulem}
\usepackage{comment}

\usepackage{caption,subcaption,pinlabel}
\usepackage{graphics}
\usepackage[pagebackref=true]{hyperref}
\usepackage{scalerel}
\usepackage{comment}
\usepackage{mathtools}
\usepackage{tikz}
\usetikzlibrary{matrix,arrows,decorations.pathmorphing}
\usepackage{mathabx}
\usetikzlibrary{matrix}
\usepackage{scrextend}

\input{preamble.tex}

\usepackage{quiver}
\usepackage{comment}
\usepackage{tikz-cd}
\usepackage{xcolor}
\title{Exotic knot traces and the epsilon-invariant}
\author[S. Sinha]{Shreya Sinha}
\address {Department of Mathematics, Dartmouth College, Hanover, NH 03755}
\email{shreya.sinha.gr@dartmouth.edu}

\author[A. Tsypin]{Allison Tsypin}
\address {Department of Mathematics, Dartmouth College, Hanover, NH 03755}
\email{allison.l.tsypin.gr@dartmouth.edu}

\begin{document}

\begin{abstract}
Building on recent work of Hom and Wan, we construct additional examples of exotic knot traces, providing partial progress toward their conjecture. Our approach uses the immersed-curve techniques in Heegaard Floer homology developed by Chen and Hanselman.
\end{abstract}

\maketitle

\section{Introduction}
The $n$-trace of a knot $K$, denoted $X_n(K)$, is the smooth manifold obtained by attaching an $n$-framed $2$-handle along $K$ to the boundary of  $D^4$. Exotic knot traces have been extensively studied; the first exotic knot traces were introduced by Akbulut \cite{akbulut-1}, \cite{akbulut-2} using Donaldson invariants (see also \cite{akbulut-matveyev-1997}). This example was later generalized to an infinite family of exotic pairs by Kalmár and Stipsicz \cite{kalmar-stipsicz-2013} using Seiberg-Witten theory. Further developments came from Yasui \cite{yasui} who constructed the two families of satellite knots with patterns $P_{n, m}$ and $Q_{n, m}$ (see Figure~\ref{fig: pnm qnm}) and showed that for certain $n,m$ and conditions on the companion knot $K$, the $n$-traces $X_n(P_{n,m}(K))$ and $X_n(Q_{n,m}(K))$ are exotic \cite[Theorem 4.1]{yasui}. Yasui additionally showed that that for any integers $n,m$ and any knot $K$, the knot traces $X_n(P_{n,m}(K))$ and $X_n(Q_{n,m}(K))$ are homeomorphic (\cite[Lemma 4.3]{yasui}).  

One approach to finding exotic knot traces involves upgrading knot concordance invariants to those on knot traces. Three notable knot concordance invariants examined include the $\tau$-invariant introduced by Ozsváth and Szabó \cite{ozsvath2003knot} and independently by Rasmussen \cite{rasmussen2003floer}, the $\varepsilon$-invariant defined by Hom \cite{hom2014bordered}, and the $\nu$-invariant defined by Ozsváth and Szabó in \cite{ozsvath2010knot}. In particular, Hayden, Mark, and Piccirillo \cite{hayden-mark-piccirillo-2021} upgraded the concordance invariant $\nu$ to an $n$-trace invariant for almost all $n$ and knots $K$, and used it to find additional exotic pairs (also using the pattern knots $P_{n,m}, Q_{n,m}$). They additionally showed that that $\tau$ and $\varepsilon$ were not $n$-trace invariants; implicit in their proof (as pointed out in \cite{homwan2026}) is that the absolute value of $\varepsilon$ is a $0$-trace invariant. More recently, using the $s$-invariant, Ren and Willis \cite{ren2024khovanov} also found exotic trace pairs involving the pattern knots $P_{n,m}, Q_{n,m}$ for certain conditions on $K$ and $m \geq 0$.  
    
In \cite{homwan2026}, Hom and Wan show that $\nu$ is an $n$-trace invariant for all cases $n$ and all knots $K$, finishing the result of \cite{hayden-mark-piccirillo-2021}. They also show the absolute value of the $\varepsilon$-invariant from knot Floer homology is an $n$-trace invariant. In particular, Hom and Wan use these results to obstruct diffeomorphisms between $X_n(P_{n, m}(K))$ and $X_n(Q_{n, m}(K))$ with certain conditions on $n, m$ and $\varepsilon(K)$. They show the following three cases using the $\nu$ and $\abs{\varepsilon}$ knot trace invariants. 

\newpage
    
\begin{theorem}\label{hom-wan-cases}(\cite[Theorem 1.3]{homwan2026}) Let $K$ be a knot in $S^3$ and $m$ any non-negative integer.
\begin{enumerate}
    \item If $\varepsilon(K)=0$, then $X_n(P_{n,m}(K))$ and $X_n(Q_{n,m}(K))$ form an exotic pair for any integer $n<0$. 
    \item If $\varepsilon(K)=0$, $X_n(P_{n,0}(K))$ and $X_n(Q_{n,0}(K))$ form an exotic pair for any integer $n\neq 0$. 
    \item If $\varepsilon(K)=1$, then $X_n(P_{n,m}(K))$ and $X_n(Q_{n,m}(K))$ form an exotic pair for any integer $n<2\tau(K)$. 
\end{enumerate}
\end{theorem}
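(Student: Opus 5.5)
Since the traces $X_n(P_{n,m}(K))$ and $X_n(Q_{n,m}(K))$ are homeomorphic for all $n,m,K$ by \cite[Lemma 4.3]{yasui}, it suffices to show they are not diffeomorphic. The plan is to distinguish them with one of the two $n$-trace invariants of Hom and Wan: $\nu$ of the attaching knot, or $\abs{\varepsilon}$ of the attaching knot. If $X_n(K_1)\cong X_n(K_2)$ forces $\nu(K_1)=\nu(K_2)$ and $\abs{\varepsilon(K_1)}=\abs{\varepsilon(K_2)}$, the whole task reduces to computing $\nu$, equivalently $\tau$ and $\varepsilon$, of the satellites $P_{n,m}(K)$ and $Q_{n,m}(K)$ in terms of $\tau(K)$, $\varepsilon(K)$, $n$ and $m$. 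We then check that the values differ in each of the three cases.

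To compute these invariants I would use the immersed curve formalism for satellites. Represent the complement of $K$ by its immersed curve $\widehat{HF}(S^3\setminus\nu K)$. Represent each pattern, viewed in the solid torus, by a bordered Heegaard diagram in the sense of Chen and Hanselman. Pairing the two gives $\widehat{HFK}$ of the satellite, and by Chen's generalization also $\tau$ and $\varepsilon$.

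The key simplification concerns the case $\varepsilon(K)=0$. There the immersed curve of $K$ has essential component a horizontal line, the same as for the unknot, plus extra closed components that do not affect $\tau$ or $\varepsilon$ of the satellite. So $\tau$ and $\varepsilon$ of $P_{n,m}(K)$ and $Q_{n,m}(K)$ equal those of $P_{n,m}(U)$ and $Q_{n,m}(U)$. These are explicit knots whose $\tau$ and $\nu$ can be read off from Yasui's and Hayden--Mark--Piccirillo's computations. I expect that in each case one of $P, Q$ has $\nu$ at least $1$ more than the other, or that $\abs{\varepsilon}$ differs, with $\abs{\varepsilon}$ doing the work when $m=0$. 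This would give items (1) and (2), the sign of $n$ entering through when the trace invariant is sensitive.

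For case (3), with $\varepsilon(K)=1$, the essential curve component wraps with slope determined by $\tau(K)$, turning upward through height $2\tau(K)$. Pairing it with the pattern curves requires tracking where the pattern curve's distinguished arc sits relative to the framing line of slope $n$. The condition $n<2\tau(K)$ should be exactly the condition that the relevant intersection lies in the region where $\tau(P_{n,m}(K))$ and $\tau(Q_{n,m}(K))$, or their $\varepsilon$ values, separate.

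The main obstacle is this pairing computation. It requires drawing correct immersed-curve models for the patterns $P_{n,m}$ and $Q_{n,m}$ for general $n,m$, and carefully computing the Maslov and Alexander gradings of the minimal-grading generator, so that $\tau$ and the sign of $\varepsilon$ can be extracted. I would first carry this out for small $m$ (say $m=0$), then argue that the twisting parameter $m$ only adds a uniform shift to both satellites and so does not affect the inequality.
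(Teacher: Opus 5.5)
There is a genuine gap, and it is exactly where the hypothesis $m\ge 0$ has to be used. For general $m$ you propose to do $m=0$ and then argue that ``the twisting parameter $m$ only adds a uniform shift to both satellites.'' This is false on both sides.

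\emph{The $Q$ side.} The band in the figure of the patterns shows that $Q_{n,m}$ is concordant to the longitude of the solid torus. So $Q_{n,m}(K)$ is concordant to $K$ for every $m$, and $\tau,\varepsilon,\nu$ of $Q_{n,m}(K)$ are simply those of $K$. Nothing shifts, and no pairing computation is needed there.

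\emph{The $P$ side.} The invariants are not a shift in $m$. The pattern $P_{n,-1}$ is also concordant to the longitude, so $\varepsilon(P_{n,-1}(K))=\varepsilon(K)$. By contrast, for $\varepsilon(K)=0$ and $n\neq 0$ one has $\varepsilon(P_{n,0}(K))=1$. This paper also finds $\varepsilon(P_{n,m}(K))=-1$ for $n<0$, $m\le -2$, versus $+1$ for $m\ge 0$. Any $m$-symmetric ``shift'' argument would also prove the statement for negative $m$, which fails at $m=-1$, where no concordance invariant can distinguish the traces.

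The missing ingredient is a one-sided monotonicity.

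\textbf{Step 1.} Changing $m$ by one is a crossing change in the twist box, and the crossing change inequality gives $\tau(P_{n,m+1}(K))\ge \tau(P_{n,m}(K))$.

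\textbf{Step 2.} Take as base values Bodish's $m=0$ computations for twisted Mazur satellites:
\begin{itemize}
\item $\tau(P_{n,0}(K))=\tau(K)+1$ when $\varepsilon(K)\in\{0,1\}$ and $n<2\tau(K)$;
\item $\varepsilon(P_{n,0}(K))=1$ when $\varepsilon(K)=0$ and $n\neq 0$.
\end{itemize}
Steps 1 and 2 give $\tau(P_{n,m}(K))\ge\tau(K)+1$ for all $m\ge 0$ in cases (1) and (3).

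\textbf{Step 3.} Conclude each case.
\begin{itemize}
\item Case (1): $\tau\neq 0$ forces $\varepsilon\neq 0$, so $|\varepsilon(P_{n,m}(K))|=1\neq 0=|\varepsilon(Q_{n,m}(K))|$.
\item Case (2): this is Bodish's value directly.
\item Case (3): $\nu(P_{n,m}(K))\ge\tau(P_{n,m}(K))\ge\tau(K)+1>\tau(K)=\nu(K)=\nu(Q_{n,m}(K))$.
\end{itemize}
In (3) your hedge ``or their $\varepsilon$ values'' must go. Both sides have $\varepsilon=1$, at least at $m=0$, and $\tau$ alone is not a trace invariant, so the obstruction has to be $\nu$.

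Your reduction for $\varepsilon(K)=0$ is sound: the essential curve component equals the unknot's, and the closed components give summands that are torsion both horizontally and vertically. But it only replaces $K$ by $U$. The values of $\tau$ and $\varepsilon$ for $P_{n,m}(U)$ still have to be obtained for every $m\ge 0$, and the appeal to Yasui and Hayden--Mark--Piccirillo for them is unsupported. As written, everything after the setup (``I expect,'' ``should be exactly the condition'') is a hope rather than an argument.

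If you want to stay within immersed curves, you need uniform-in-$m$ control of the pattern curve, such as the winding move used in this paper, plus an explicit pairing in each case. The crossing-change route avoids all of that, and it also explains why the theorem is restricted to $m\ge 0$.
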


In particular, Theorem~\ref{hom-wan-cases} covered cases that were not known to be exotic by the previous approaches, see \cite[Remark 1.4]{homwan2026}. Hom and Wan additionally posit the following conjecture about exactly which knot traces form exotic pairs; they expect that for every non-trivial case, the resulting knot traces are always exotic.
\begin{conjecture}\label{hom conjecture}(\cite[Conjecture 1.5]{homwan2026}) Let $K$ be any knot in $S^3$, and $m, n$ integers. Then $X_n(P_{n,m}(K)$) and $X_n(Q_{n,m}(K))$
form an exotic pair, except when $K$ is the unknot and $m = n = 0$.
\end{conjecture}
 \begin{figure}
        \centering
        \includegraphics[width=0.75\linewidth]{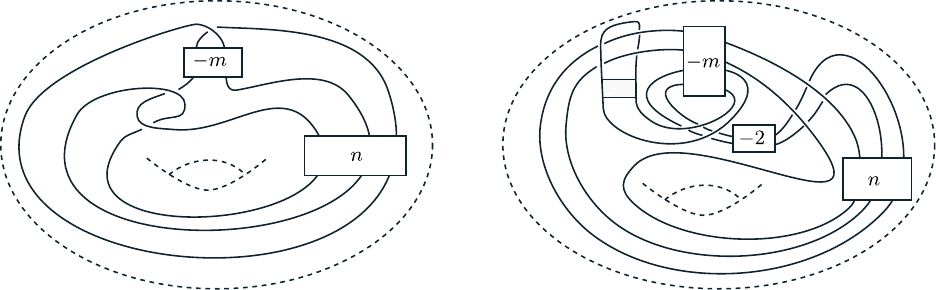}
        \caption{The satellite patterns $P_{n,m}$ (left) and $Q_{n,m}$ (right). The shaded band shows $Q_{n,m}$ is concordant to a longitude of $S^1\times D^2$. Here the box represents the number of full-twists, with sign.}
        \label{fig: pnm qnm}
    \end{figure}
    
 In this paper, we apply Chen-Hanselman \cite{chen-hanselman-2026-immersed} immersed curves methods, building on those developed by Hanselman, Rasmussen, and Watson \cite{hrw2024bordered}, to identify new cases where $X_n(P_{n,m}(K))$ and $X_n(Q_{n,m}(K))$ form an exotic pair. Specifically, we compute the following $\varepsilon$ invariants:

\begin{theorem}
\label{main theorem} Let $K$ be a knot in $S^3$. \begin{enumerate}
    \item If $\varepsilon(K)=0, n>0$, and $m\geq 1$, $\varepsilon(P_{n,m}(K))=1$.
    \item If $\varepsilon(K)=0, n > 0$, and $m\leq-2$, $\varepsilon(P_{n,m}(K))=1$.
    \item If $\varepsilon(K)=0, n < 0$, and $m\leq-2$, $\varepsilon(P_{n,m}(K))=-1$.
\end{enumerate}
\end{theorem}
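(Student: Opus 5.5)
The plan is to compute $\varepsilon(P_{n,m}(K))$ directly with the Chen--Hanselman immersed-curve satellite formula. The first step is to reduce to the case where $K$ is the unknot. Since $\varepsilon(K)=0$, the immersed curve of $K$ in the punctured torus $\partial(S^3\setminus\nu K)$ consists of a horizontal line through the puncture, which is the curve of the unknot, together with closed components. Equivalently, $CFK^\infty(K)\simeq CFK^\infty(U)\oplus A$ with $A$ acyclic, by Hom's characterization. In the Chen--Hanselman pairing, the satellite's knot Floer complex is computed by pairing the doubly pointed bordered Heegaard diagram (or doubly pointed immersed curve) of the pattern $P_{n,m}$ with the curve of $K$. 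The closed components then contribute only summands that are acyclic after localization, or more precisely summands that do not carry the $\mathbb{F}[U,U^{-1}]$ tower. The $\varepsilon$ invariant is determined by the distinguished generator, so I expect $\varepsilon(P_{n,m}(K))=\varepsilon(P_{n,m}(U))$. To make this rigorous I would use the fact that satellite operations preserve $\varepsilon$-equivalence (Hom's group $\mathcal{CFK}$ modulo $\varepsilon$-equivalence): $K\sim_\varepsilon U$ implies $P(K)\sim_\varepsilon P(U)$. If this is not directly available, I would verify it in the immersed-curve picture by checking that the essential component of the paired curve comes only from pairing with the horizontal line.

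Next I identify $P_{n,m}(U)$ concretely. In $S^1\times D^2$, $P_{n,m}$ embedded by the trivial (unknotted) identification gives a specific knot in $S^3$, obtained from Figure~\ref{fig: pnm qnm} by forgetting the solid torus. It is a twisted knot depending on $n,m$, plausibly a twist-knot or two-bridge type knot. I would draw it, determine it as a two-bridge knot (or otherwise compute its knot Floer homology, e.g.\ via thinness or an explicit genus-one/alternating structure), and read off $\varepsilon$ from $\tau$. For thin knots, $\varepsilon=\operatorname{sgn}\tau$, and $\tau=-\sigma/2$. So the task becomes computing the signature of $P_{n,m}(U)$ as a function of $n$ and $m$, and checking that $\tau>0$ when $n>0$ with $m\ge 1$ or $m\le -2$, and $\tau<0$ when $n<0$ with $m\le -2$. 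The excluded values $m=0,-1$ (and the case $n<0$, $m\geq 0$) should be exactly where $\sigma=0$ or the sign flips, which serves as a consistency check.

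If $P_{n,m}(U)$ fails to be thin, I would instead compute $\varepsilon$ directly by the immersed-curve pairing. I would place the pattern's doubly pointed curve against the horizontal line, where lifting to the covering $\mathbb{R}^2\setminus \mathbb{Z}^2$ makes the intersections explicit. I would then read off the vertical and horizontal complexes and determine whether the generator of vertical homology is a boundary of a horizontal arrow, is the source of one, or neither.

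I expect the main obstacle to be the first step: justifying rigorously that the closed components of the immersed curve for $K$ do not affect $\varepsilon$ of the satellite. This is subtle because Chen--Hanselman pairing for $(1,1)$-patterns versus general patterns requires care with the doubly pointed diagram. I would handle it by showing the pattern admits a genus-one doubly pointed bordered diagram, so the pairing is curve-with-curve Lagrangian Floer homology, and then arguing via the invariance of $\varepsilon$-classes under satellites.
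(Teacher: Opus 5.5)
Your reduction to the unknot is sound. It is essentially what the paper does implicitly when it uses only the unstable chain of $\alpha_K$. For $\varepsilon(K)=0$, the essential component of the curve of $K$ agrees with that of the unknot with the same framing. The paired complex splits as a direct sum over the components of $\alpha_K$. Closed components only contribute summands with vanishing $U$- and $V$-localized homology, so they cannot change $\varepsilon$.

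So this step is not the real difficulty. The difficulty is computing $\varepsilon(P_{n,m}(U))=\varepsilon(P_{0,m}(U_{[n]}))$, and there your main route fails. In all three cases of the theorem the paper has $\tau(P_{n,m}(K))=0$, while $\varepsilon=\pm 1$. (The generator $x_1$ that survives cancellation of minimal-filtration-difference pairs is the point fixed by the hyperelliptic involution, hence has Alexander grading $0$.)

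Thin knots satisfy $\varepsilon=\operatorname{sgn}\tau$. Consequently:
\begin{itemize}
\item the knots $P_{n,m}(U)$ are not thin, and in particular not two-bridge or alternating;
\item neither $\tau$ nor $\sigma$ detects $\varepsilon$ in these cases;
\item the inequalities $\tau>0$, resp.\ $\tau<0$, that you plan to verify are false.
\end{itemize}
Your consistency check is also off. For $m=0$, $n\neq 0$, and for $n<0$, $m\ge 0$, Hom--Wan already have $\varepsilon\neq 0$; those cases are omitted because they were known. Only $m=-1$ is omitted for a structural reason, namely that $P_{n,-1}$ is concordant to the core.

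Everything therefore rests on your fallback, ``compute the pairing directly.'' That is the entire content of the paper's proof, and your proposal leaves it unspecified. Three things are missing.

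\emph{(i) A genus-one doubly pointed diagram valid for all $m$.} The paper identifies $P_{0,m}\cup\mu$ with the two-bridge link $C(2,2m,2,1,2)$, resp.\ $C(-2,2m+1,2,1,2)$ for $m<0$. It then uses Chen's classification of $(1,1)$-patterns to get $\mathcal{H}(8m+2,3)$, resp.\ $\mathcal{H}(-(8|m|-5),3)$, with one dependent strand. It moves the $n$ twists into the framing of the companion, so that $\alpha_K$ has a vertical segment of height $n$ in each column.

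\emph{(ii) A way to treat infinitely many $m$ at once.} The paper uses the winding move of $z$ around $w$. This carries $\mathcal{H}(P_{0,m})$ to $\mathcal{H}(P_{0,m+1})$ for $m\ge 1$ and to $\mathcal{H}(P_{0,m-1})$ for $m\le -2$, and it preserves the relevant bigons.

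\emph{(iii) The identification of the $\mathbb{F}_2[U]$-tower generator of the horizontal complex after a change of basis, together with its $V$-arrows.} In cases (1) and (2) there is an arrow weighted by a power of $V$ from $y_1$ into $x_1$, giving $+1$. In case (3) there are outgoing $V$-arrows to $y_1,y_2$ from $x^*=x_1+x_4+x_7+\sum_{j}U^j(x_{4+6j}+x_{7+6j})$, giving $-1$.

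Without these, your argument determines neither the sign nor even the nonvanishing of $\varepsilon(P_{n,m}(K))$.
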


This allows us to make partial progress towards Conjecture~\ref{hom conjecture} by showing the following:
\begin{corollary}
\label{main corollary}Let $K$ be a knot in $S^3$. \begin{enumerate}
    \item If $\varepsilon(K)=0, n>0$, and $m\geq 1$, $X_n(P_{n,m}(K))$ and $X_n(Q_{n,m}(K))$ form an exotic pair.
    \item If $\varepsilon(K)=0, n \neq 0$, and $m\leq-2$, $X_n(P_{n,m}(K))$ and $X_n(Q_{n,m}(K))$ form an exotic pair.
\end{enumerate}
\end{corollary}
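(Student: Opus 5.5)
The corollary should follow by combining Theorem~\ref{main theorem} with two facts. First, by Yasui \cite[Lemma 4.3]{yasui}, $X_n(P_{n,m}(K))$ and $X_n(Q_{n,m}(K))$ are homeomorphic for all $n,m,K$. Second, by Hom and Wan \cite{homwan2026}, $\abs{\varepsilon}$ is an $n$-trace invariant. So it suffices to show that $\abs{\varepsilon(P_{n,m}(K))}\neq\abs{\varepsilon(Q_{n,m}(K))}$ in the stated ranges. A diffeomorphism $X_n(P_{n,m}(K))\cong X_n(Q_{n,m}(K))$ would then contradict the invariance, and the pair is exotic.

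The first step is to compute $\varepsilon(Q_{n,m}(K))$. By the caption of Figure~\ref{fig: pnm qnm}, $Q_{n,m}$ is concordant in $S^1\times D^2$ to a longitude, i.e.\ to the core pattern. Satellite operations respect concordance of patterns, so $Q_{n,m}(K)$ is concordant to $K$ itself. Since $\varepsilon$ is a concordance invariant, we get $\varepsilon(Q_{n,m}(K))=\varepsilon(K)=0$ under our hypothesis.

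The second step is to read off $\varepsilon(P_{n,m}(K))$ from Theorem~\ref{main theorem}. For part (1) we have $n>0$ and $m\ge 1$, so $\varepsilon(P_{n,m}(K))=1$ by Theorem~\ref{main theorem}(1). For part (2) we split on the sign of $n$. If $n>0$ and $m\le -2$, Theorem~\ref{main theorem}(2) gives $\varepsilon=1$. If $n<0$ and $m\le-2$, Theorem~\ref{main theorem}(3) gives $\varepsilon=-1$. In every case $\abs{\varepsilon(P_{n,m}(K))}=1\neq 0=\abs{\varepsilon(Q_{n,m}(K))}$, which gives the result.

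The only point needing care is the concordance claim for $Q_{n,m}(K)$. One has to check that the band in the figure gives a concordance of patterns inside $S^1\times D^2\times I$, so that it induces a concordance of the satellites, and not merely a concordance of the patterns viewed as knots in $S^3$. This follows from the figure, since the band lies in the solid torus. Everything else is bookkeeping once Theorem~\ref{main theorem} is established.
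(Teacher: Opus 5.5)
Your proposal is correct and matches the paper's proof. The paper also uses the band concordance of $Q_{n,m}$ to the longitude to get $\varepsilon(Q_{n,m}(K))=\varepsilon(K)=0$, reads off $|\varepsilon(P_{n,m}(K))|=1$ from Theorem~\ref{main theorem} in each sign case, and invokes Hom--Wan's result that $|\varepsilon|$ is an $n$-trace invariant to rule out a diffeomorphism. Your explicit appeal to Yasui's homeomorphism lemma states a step the paper's proof leaves implicit; the paper cites that lemma only in the introduction.
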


The cases in Corollary~\ref{main corollary} are complementary to those in Theorem~\ref{hom-wan-cases}. Moreover, as far as the authors are aware, no previous examples of $X_n(P_{n,m}(K))$ and $X_n(Q_{n,m}(K))$ with $m < 0$ (for any knot $K$ and integer $n$) were known to form an exotic pair (see \cite[Remark 1.4]{homwan2026}). 

This result, combined with cases (1) and (2) in Theorem~\ref{hom-wan-cases} finishes all $(n, m)$ cases for $\varepsilon(K)=0, m\neq -1$, and $n\neq 0$ -- indeed, all such cases give us an exotic pair of trace manifolds. 

\begin{corollary}
    Let $K$ be a knot in $S^3$. If $\varepsilon(K)=0$, $n\neq 0$, and $m\neq -1$, then $X_n(P_{n,m}(K)$) and $X_n(Q_{n,m}(K))$
form an exotic pair.
\end{corollary}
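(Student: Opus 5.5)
The final corollary follows by splitting the set $\{(n,m): n\neq 0,\ m\neq -1\}$ into cases and quoting results already stated. Throughout, assume $\varepsilon(K)=0$. Every integer $m\neq -1$ satisfies exactly one of three conditions: $m=0$, $m\geq 1$, or $m\leq -2$. Similarly, every $n\neq 0$ is either positive or negative.

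\textbf{Case $m=0$.} Here we apply Theorem~\ref{hom-wan-cases}(2) directly. It gives an exotic pair $X_n(P_{n,0}(K))$, $X_n(Q_{n,0}(K))$ for every $n\neq 0$.

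\textbf{Case $m\geq 1$.} We split on the sign of $n$.
\begin{enumerate}
\item If $n<0$, then $m$ is a non-negative integer, so Theorem~\ref{hom-wan-cases}(1) applies and gives an exotic pair.
\item If $n>0$, then Corollary~\ref{main corollary}(1) applies and gives an exotic pair.
\end{enumerate}

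\textbf{Case $m\leq -2$.} Corollary~\ref{main corollary}(2) covers every $n\neq 0$ at once.

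These cases exhaust all the hypotheses, so the statement follows. I expect no genuine obstacle at this stage.

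The real content sits upstream, in Corollary~\ref{main corollary}. Recall how that corollary is obtained, since this is what the present proof relies on.
\begin{enumerate}
\item By \cite[Lemma 4.3]{yasui}, the two traces $X_n(P_{n,m}(K))$ and $X_n(Q_{n,m}(K))$ are homeomorphic.
\item By Hom--Wan, $\abs{\varepsilon}$ is an $n$-trace invariant.
\item The pattern $Q_{n,m}$ is concordant to a longitude of $S^1\times D^2$, as the shaded band in Figure~\ref{fig: pnm qnm} shows. Hence $Q_{n,m}(K)$ is concordant to $K$, and so $\varepsilon(Q_{n,m}(K))=\varepsilon(K)=0$.
\item Theorem~\ref{main theorem} gives $\abs{\varepsilon(P_{n,m}(K))}=1$ in the relevant range.
\end{enumerate}
Since $\abs{\varepsilon}$ takes different values on $P_{n,m}(K)$ and $Q_{n,m}(K)$, the traces cannot be diffeomorphic. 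Combined with the homeomorphism in the first step, this shows they form an exotic pair.

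The only point needing care in the present proof is bookkeeping. One must check that the hypotheses quoted from Theorem~\ref{hom-wan-cases} really allow every non-negative $m$ when $n<0$; they do, since part (1) is stated for all $m\geq 0$. One must also check that the case $m=0$ with $n>0$ is not left out; part (2) covers it.
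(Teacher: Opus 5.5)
Your proposal is correct and follows the paper's own argument: the paper proves this corollary by combining Corollary~\ref{main corollary} with cases (1) and (2) of Theorem~\ref{hom-wan-cases}. Your split into $m=0$, $m\geq 1$ (by sign of $n$) and $m\leq -2$ is just the case bookkeeping the paper leaves implicit, and making Yasui's homeomorphism explicit is a harmless addition, since the paper's proof of Corollary~\ref{main corollary} only spells out the non-diffeomorphism half.
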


\subsection{Outline of the proof}
$Q_{n,m}$ is concordant to a longitude of the solid torus (see the shaded band in Figure~\ref{fig: pnm qnm}), so $Q_{n,m}(K)$ is concordant to $K$. In particular, $ \varepsilon (Q_{n,m}(K))=\varepsilon(K)$. It thus suffices to show that for the cases detailed in Theorem~\ref{main theorem}, $\abs{\varepsilon(P_{n,m}(K))}$ differs from $\abs{\varepsilon(K)}$. In \cite{homwan2026}, Hom and Wan used the crossing change inequality of the $\tau$-invariant and its relation to the $\varepsilon$- and $\nu$-invariants to prove Theorem~\ref{hom-wan-cases}. The crossing change inequality states that $\tau(P_{n,m+1}(K))\geq \tau(P_{n,m}(K))$, which can then be used to obtain bounds on $\tau(P_{n,m}(K))$. In case (1) of Theorem~\ref{hom-wan-cases}, Hom and Wan use this to obtain $\tau(P_{n,m}(K))\geq 1$, which implies that $\abs{\varepsilon(P_{n,m}(K))}=1\neq\abs{\varepsilon(Q_{n,m}(K))}$. In (2), $\varepsilon(P_{n,0}(K))$ is shown to be 1 directly using a result of Bodish \cite{Bodish-twisted-Mazur}. Finally, in (3), the crossing change inequality is used to bound $\nu(P_{n,m}(K))$ and show it differs from $\nu(Q_{n,m}(K))$. 

In the $n>0$ cases of Corollary~\ref{main corollary}, we obtain that $\nu(P_{n,m}(K))=\nu(Q_{n,m}(K))$; see Table~\ref{tab:epsilon 0} in Section~\ref{section: conclusion}. In the $n<0, m\leq -2$ case, we are decreasing $m$ compared to the base case of $P_{n,0}$, so the crossing change inequality does not give a useful bound. In all our cases, we  obtain $\tau(P_{n,m}(K))=0$, so cannot conclude anything about the $\abs{\varepsilon}$ invariant from $\tau$. 

We thus rely on finding the $\abs{\varepsilon}$ invariant directly to show $X_n(P_{n,m}(K))$ and $X_n(Q_{n,m}(K))$ are not diffeomorphic. We do so by using the immersed curve techniques from bordered Heegaard Floer homology. See \cite{hanselman2023cabling, Chen-2023, Patwardhan-Xiao, Bodish-twisted-Mazur} for related prior work on this. 

\begin{remark}
    Note that $m=-1$ is not included in our cases; for $m=-1$, the pattern $P_{n,m}$ is always concordant to the standard longitude, as shown in Figure~\ref{fig:m=-1 concordance}. Thus for $m=-1$, any invariant of the $n$-trace that is also a knot concordance invariant of the underlying knot will not be sufficient to distinguish $X_n(P_{n,m}(K)$) from $X_n(Q_{n,m}(K))$. We also do not consider the case of $\varepsilon(K)=0, n=0$. In this case, on one hand, since $\varepsilon(K)=0$, our concordance invariants do not differentiate between $K$ and the standard unknot $U$. On the other hand, $P_{0,m}$ is an unknotted pattern in $S^3$ and $Q_{0,m}$ is concordant to an unknotted pattern in $S^3$ (via the shaded band in Figure~\ref{fig: pnm qnm}), so we obtain $\varepsilon(P_{0,m}(K))= \varepsilon(Q_{0,m}(K))= 0$. 
\end{remark}
\subsection*{Organization}
In Section~\ref{section: alpha}, we build the first immersed curve of the construction, $\alpha_K$, associated to the companion knot $K$ with framing $n$. In Section~\ref{section: heegaard}, we construct the Heegaard diagram for $P_{0,m}$, and in Section~\ref{section: beta}, we build the second immersed curve, $\beta$, associated to $P_{0,m}$. In Section~\ref{section: pairing}, we combine the immersed curves and compute $\abs{\varepsilon(P_{n,m}(K))}$, proving Theorem~\ref{main theorem}. In Section~\ref{section: conclusion}, we conclude the proof of Corollary~\ref{main corollary} and discuss the scope of our results. 
\subsection*{Acknowledgments} The authors thank Abhishek Mallick for pointing them towards this problem and for helpful discussions. We also thank Ina Petkova for helpful conversations, and Jennifer Hom and Wenzhao Chen for useful comments on an earlier draft.
\begin{figure}
    \centering
    \includegraphics[width=0.75\linewidth]{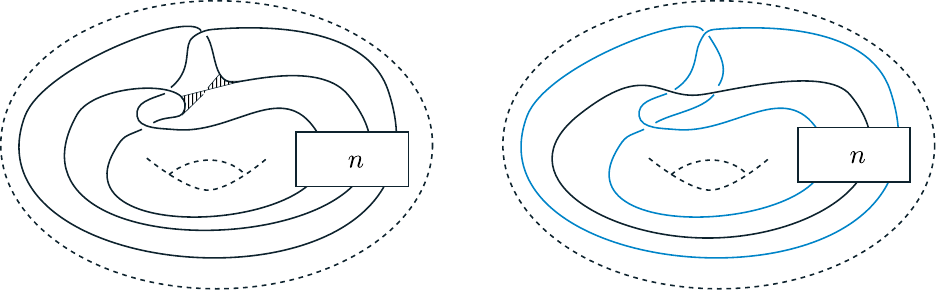}
    \caption{The shaded band shows a concordance between $P_{n,-1}$ and the longitude of the torus: the blue component can be isotoped to a homologically trivial unknot inside $S^1\times D^2$.}
    \label{fig:m=-1 concordance}
\end{figure}

\section{The $\alpha_K$ curve for the $n$-framed companion knot}\label{section: alpha} 

We assume that the reader is familiar with the concepts of bordered Heegaard Floer homology and its immersed curve interpretation. For the sake of establishing notation, we include a brief background below. Lipshitz, Ozsváth, and Thurston in \cite{LOT-bordered-floer} developed bordered Heegaard Floer homology, which associates differential modules (type $D$-structures) and $\mathcal{A}_{\infty}$ modules (type $A$-structures) to $3$-manifolds with boundary. Hanselman, Rasmussen, and Watson \cite{hrw2024bordered} introduced a geometric interpretation of bordered Heegaard Floer homology, translating type $D$ and $A$ structures into immersed curves, where the pairing between these structures translates to counting the intersection points between the immersed curves. Chen and Hanselman \cite{chen-hanselman-2026-immersed} describe a method for computing the knot Floer complex $\mathit{CFK}^-_\mathcal{R}(P(K))$ of a satellite knot $P(K)$ over the ring $\mathcal{R}=\mathbb{F}_2[U,V]/UV$ using immersed curves, in which the pattern knot and the companion knot are translated to immersed curves that are then paired. In this section, we describe the immersed curve associated to the companion knot, denoted $\alpha_K$ (typically depicted in red). The curve associated to the pattern knot is denoted $\beta$ (typically depicted in blue). 

We consider the case of companion knots $K$ with framing $n$ (which we denote by $K_{[n]}$) with $\varepsilon(K)=0$. In this case, $\tau(K)=0$ so the sign of $2\tau(K)-n$ is determined by the sign of $n$. It follows from \cite[Theorem 2.6]{Levine-nonsurjective} and \cite[Theorem 11.26]{LOT-bordered-floer} that the unstable chain in the type $D$-structure becomes 
\[\begin{tikzcd}[cramped,column sep=small]
	{\xi_0} && {\mu_1} && {...} && {\mu_n} && {\eta_0}
	\arrow["{\rho_{123}}", from=1-1, to=1-3]
	\arrow["{\rho_{23}}", from=1-3, to=1-5]
	\arrow["{\rho_{23}}", from=1-5, to=1-7]
	\arrow["{\rho_2}", from=1-7, to=1-9]
\end{tikzcd}\]
when $n>0$ and 
\[\begin{tikzcd}[cramped,column sep=small]
	{\eta_0} && {\mu_1} && \dots && {\mu_{\vert{n}\vert}} && {\xi_0}
	\arrow["{\rho_3}", from=1-1, to=1-3]
	\arrow["{\rho_{23}}", from=1-3, to=1-5]
	\arrow["{\rho_{23}}", from=1-5, to=1-7]
	\arrow["{\rho_1}"', from=1-9, to=1-7]
\end{tikzcd}\]
when $n<0$. Translating this to immersed train tracks as in \cite{hrw2024bordered} (see also \cite[Figure 13]{chen-hanselman-2026-immersed}), we get Figure~\ref{fig:alpha train tracks}.

\begin{figure}
    \centering
    \includegraphics[width=0.75\linewidth]{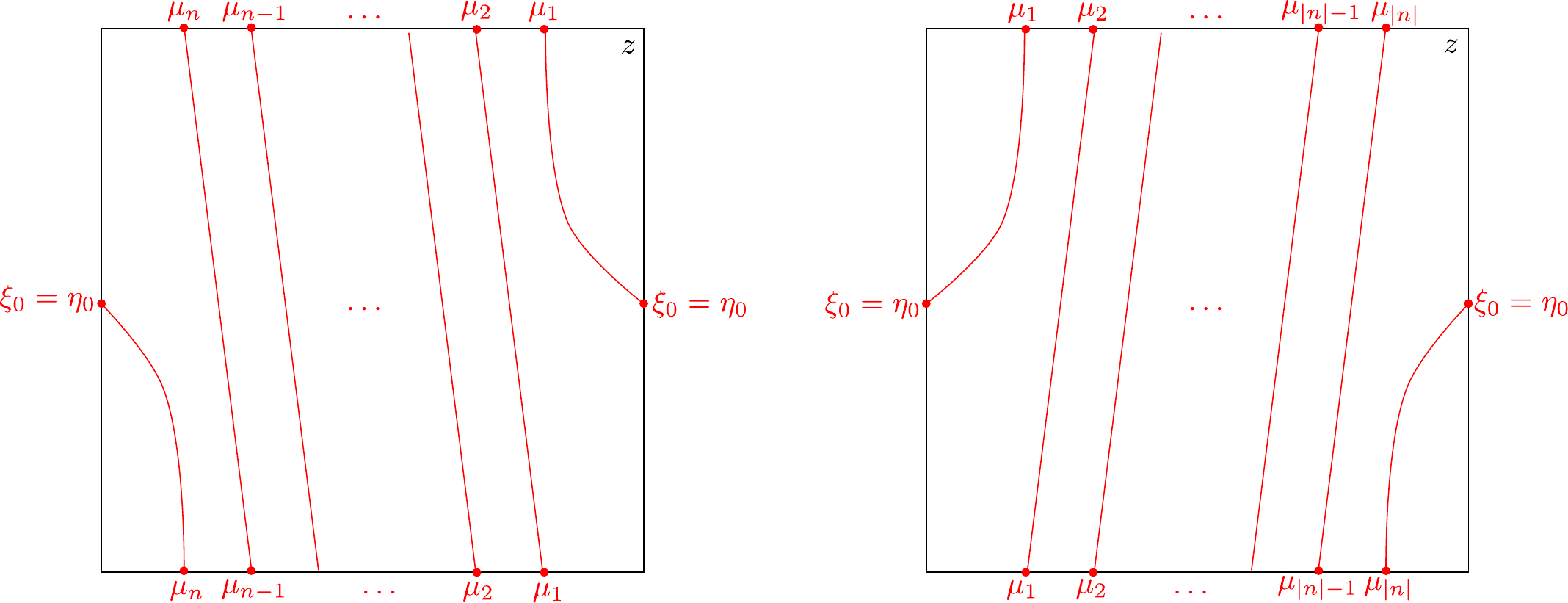}
    \caption{The immersed train tracks obtained from the unstable chain when $\varepsilon(K)=0$. Left: $n>0$, right: $n<0$.}
    \label{fig:alpha train tracks}
\end{figure}

We note there may be other elements of the type $D$-structure, but in this case the unstable chain gives us the principal $\alpha_K$ curve. After lifting to the universal cover, this corresponds to an infinite curve, with a vertical segment of height $n$ in each column. See Figure~\ref{fig:alpha curve} for the curve obtained for each sign of $n$.

\begin{figure}
    \centering
    \includegraphics[width=0.75\linewidth]{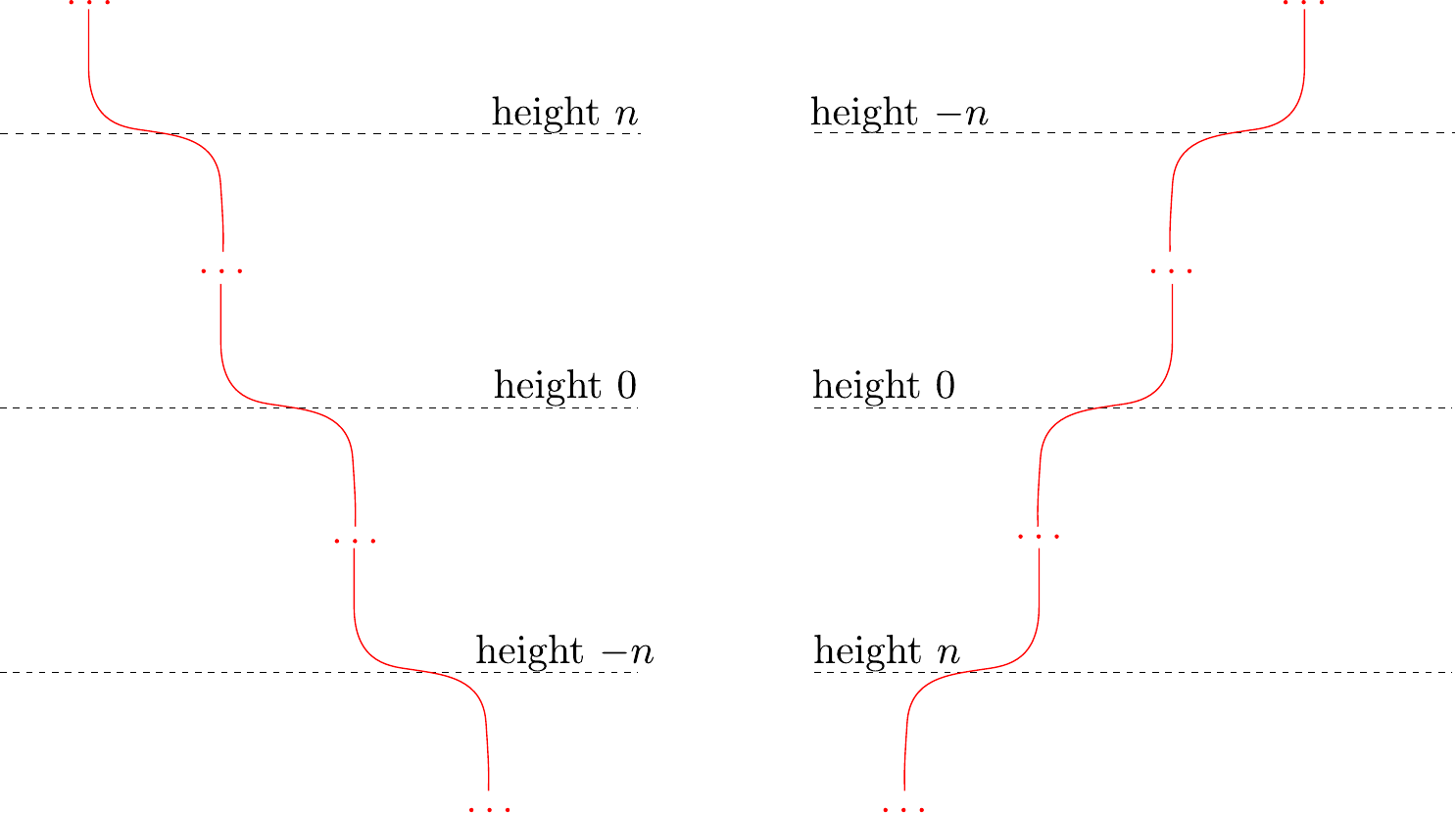}
    \caption{The shape of the $\alpha_K$ curve when $\varepsilon(K)=0$. Left: $n>0$, right: $n<0$.}
    \label{fig:alpha curve}
\end{figure}

\section{The Heegaard diagram for $P_{0, m}$}\label{section: heegaard}

We now want to construct the immersed curve $\beta$ of the pattern knot $P_{0,m}$. Instead of looking directly at $P_{n, m}(K)$, we transfer the $n$ full meridional twists to the framing of the companion knot and work with $P_{0,m}(K_{[n]})$, where we again use $K_{[n]}$ to mean the knot $K$ with framing $n$, instead of $P_{n,m}(K)$. The resulting knots are isotopic so all invariants remain the same. It thus suffices for us to consider the pattern $P_{0, m}$ and its bordered Heegaard diagram, which is a doubly-pointed Heegaard diagram with basepoints labeled $z$ and $w$. 

We first observe that the pattern knot $P_{0, m}$ together with the meridian of the solid torus forms a $2$-bridge link. We start by computing its Conway form. The top isotopy in Figure~\ref{fig:m-both-isotopy} shows that for $m \geq 0$, \[P_{0,m}\cup \text{meridian of } S^1\times D^2=C(2, 2m, 2, 1, 2)\] and the bottom isotopy in Figure~\ref{fig:m-both-isotopy} shows that for $m<0$,  \[P_{0,m}\cup \text{meridian of } S^1\times D^2 = C(-2, 2m+1, 2, 1, 2)\]

\begin{figure}
    \centering
    \includegraphics[width=0.95\linewidth]{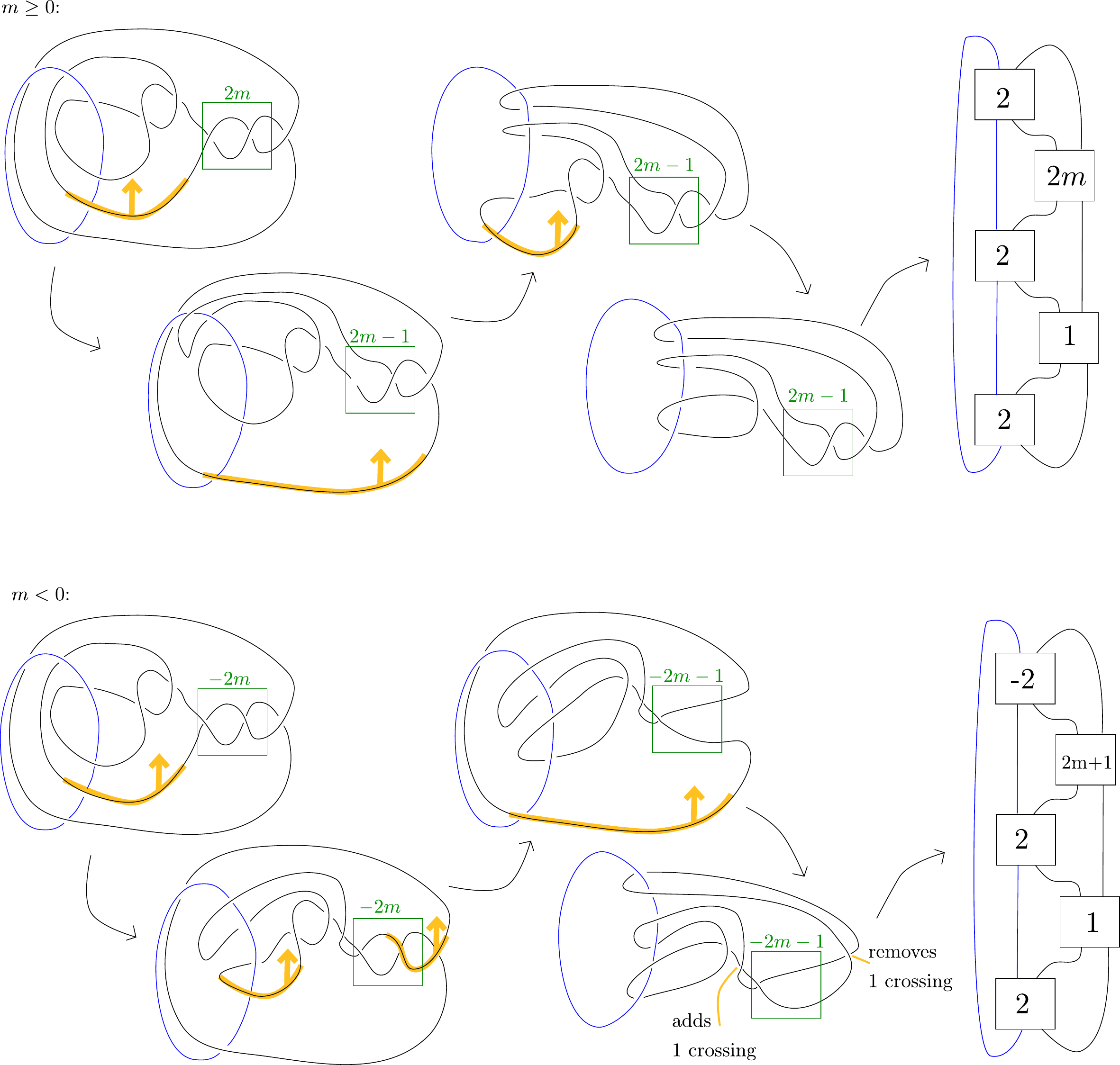}
    \caption{Isotopy of $P_{0,m}$ into Conway form, for $m\geq 0$ (above) and $m< 0$ (below). The strand that moves at each step is highlighted, with the direction of movement indicated. Green numbers indicate the number of crossings inside the green box.}
    \label{fig:m-both-isotopy}
\end{figure}

Using the Conway form, we translate our knots into Schubert normal form (see \cite[Section 12]{burde-zieschang-2003-knots}), then use this to build Heegaard diagrams for our pattern knots $P_{0,m}$. In particular, the family $P_{0,m}$ are examples of what are called \textit{$(1, 1)$-patterns}, see \cite[Definition 1.1]{Chen-2023}. By \cite[Section 5.1]{Chen-2023}, these patterns admit Heegaard diagrams $\mathcal{H}(r, s)$ with $r,s$ integers such that $r \neq 0$, $s \geq 0$, and $\gcd(2r-1, s+1) = 1$. These $\mathcal{H}(r,s)$ diagrams consist of the following elements (see Figure~\ref{fig:heegaard-diagram} for the diagrams of $P_{0,m}$).

\begin{enumerate}
    \item A $z$-basepoint is placed near the left edge of the diagram and a $w$-basepoint is placed near the right edge. Note our labeling convention for the basepoints is the opposite of the one in \cite{Chen-2023}.
    \item There are $\abs{r}$ arcs around each basepoint starting and ending at the same vertical edge of the Heegaard diagram (referred to as \textit{rainbows}, part of the \textit{main region} of the diagram).
    \item $s$ segments slant across the diagram. If $r>0$, they slant downwards, starting from the left edge of the diagram above the $z$-basepoint and ending below the $w$-basepoint on the right edge. If $r<0$, they slant upwards, starting from the left edge below the $z$-basepoint and ending above the $w$-basepoint. These segments are referred to as \textit{strands}, also elements of the main region of the diagram.
    \item Additional segments and arcs are placed above the elements in the main region, added to ensure the intersection numbers with the edges of the diagram satisfy certain properties -- see below. This part of the diagram is referred to as the \textit{dependent region}).
\end{enumerate}

The strands and rainbows join to become one arc, called the \textit{main arc} (denoted $l_m$), and the dependent region strands join to create one arc, called the \textit{dependent arc} (denoted $l_d$). The $\beta$ curve of the Heegaard diagram is the union of arcs $l_m \cup l_d$. Finally, the horizontal edge of the Heegaard diagram is denoted $\lambda$ and and the vertical is $\mu$. It is required that the algebraic intersection numbers satisfy $(l_m \cup l_d) \cdot \lambda = 1$ and $(l_m \cup l_d) \cdot \mu = 0$ to produce an admissible diagram. We denote such a Heegaard diagram as $\mathcal{H}(r, s)$. 

\begin{figure}[h!]
        \centering
        \includegraphics[width=\linewidth]{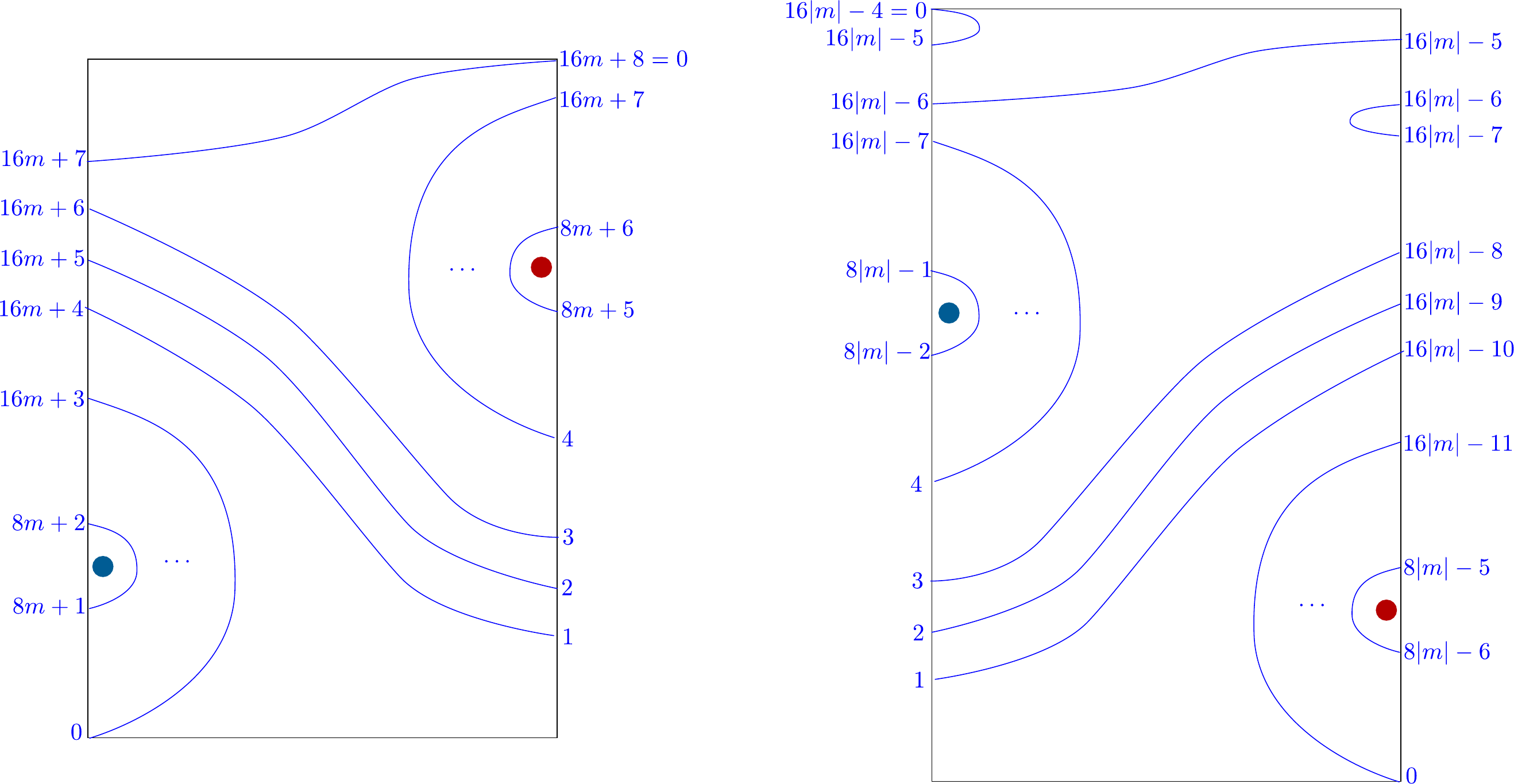}
        \caption{The Heegaard diagrams for $P_{0,m}$. Left: $m\geq 0$, right $m<0$. The blue dots represent $z$-basepoints, while the red dots represent $w$-basepoints.}
        \label{fig:heegaard-diagram}
    \end{figure}

Using the correspondence between Conway form and the Schubert normal form of two-bridge links (see \cite[Prop. 12.13]{burde-zieschang-2003-knots}), we compute the Schubert normal form $b(p,q)$ of $P_{0,m}$ for $m\geq0$:
\[\frac{q}{p}=\frac{1}{2+\frac{1}{2m+\frac{1}{2+\frac{1}{1+\frac{1}{2}}}}}=\frac{16m+3}{32m+14}\]
For $m<0$, this becomes 
\[\frac{q}{p}=\frac{1}{-2+\frac{1}{2m+1+\frac{1}{2+\frac{1}{1+\frac{1}{2}}}}}=\frac{16m+11}{-32m-14}\]
We now use \cite[Theorem 5.4]{Chen-2023} to determine $r$ and $s$ in the associated Heegaard diagram $\mathcal{H}(r, s)$:
\[\begin{cases}
    \operatorname{sgn}(r)(2\abs{r}-1)=q\\
    2s+4\abs{r} = p 
\end{cases}\implies \begin{cases}
    m\geq 0: r = 8m+2, s = 3\\
    m<0: r = -(8\abs{m}-5), s=3
\end{cases}\]
As a check, note that that the Heegaard diagram for $m=0$ is $\mathcal{H}(2, 3)$, which recovers the Mazur pattern, and the diagram for $m=-1$ is $\mathcal{H}(-3, 3)$, which recovers the Mazur pattern with one flipped crossing. 

To finish building $\mathcal{H}(r, s)$, we calculate how many strands are in the dependent region. By \cite[Section 5]{Chen-2023}, for $\mathcal{H}(r, s)$ to be an admissible diagram, we require $l_d$ to satisfy $(l_m \cup {l_d}) \cdot \mu = 0$. We use \cite[Proposition 6.2]{Chen-2023}, which states that the algebraic intersection number of $l_m$ and a small push-off of $\mu$ is given by the following formula:

\begin{equation*}
    l_m \cdot \mu = 1 - \underset{i=1}{\overset{2r+s-1}{\sum}} (-1)^{\lfloor \frac{i(2r-1)}{2r+s} \rfloor}
\end{equation*}

We can calculate this for either sign of $m$. For $m > 0 $ we evaluate 
\begin{align*}
    \underset{i=1}{\overset{2r+s-1}{\sum}} (-1)^{\lfloor \frac{i(2r-1)}{2r+s} \rfloor} &
    = \underset{i=1}{\overset{16m+6}{\sum}} (-1)^{\lfloor \frac{i(16m+3)}{16m+7} \rfloor}\\
    &= \underset{i=1}{\overset{4m+1}{\sum}} (-1)^{i-1} 
    +  \underset{i=4m+2}{\overset{8m+3}{\sum}} (-1)^{i-2} 
    + \underset{i=8m+4}{\overset{12m+5}{\sum}} (-1)^{i-3} 
    + \underset{i=12m+6}{\overset{16m+6}{\sum}} (-1)^{i-4} \\
    &= 1+ 0+0+1 = 2.
\end{align*} 

Thus, $l_m \cdot \mu = -1$ in the $m\geq 0$ case. For $m < 0$, we have 
\begin{align*}
    \underset{i=1}{\overset{2r+s-1}{\sum}} (-1)^{\lfloor \frac{i(2r-1)}{2r+s} \rfloor} 
    &= \underset{i=1}{\overset{16|m|-8}{\sum}} (-1)^{\lfloor \frac{i(16|m|-11)}{16|m|-7} \rfloor} \\
    &= \underset{i=1}{\overset{4|m|-2}{\sum}} (-1)^{i-1}
    + \underset{i= 4|m|-1}{\overset{8|m|-4}{\sum}} (-1)^{i-2} + \underset{i= 8|m|-3}{\overset{12|m|-6}{\sum}} (-1)^{i-3} 
    + \underset{i= 12|m|-5}{\overset{16|m|-8}{\sum}} (-1)^{i-4}\\ 
    &= 0+0+0+0=0
\end{align*}
so $l_m \cdot \mu = 1$ for the negative case. Thus, to satisfy $\beta \cdot \mu = 0$, we need one dependent region strand in each case. To make its orientation compatible with the orientation on $l_m$, in the $m<0$ case we add two small rainbows in the dependent region which do not encircle any basepoints. Our Heegaard diagram for $P_{0,m}$ therefore has the form shown in Figure~\ref{fig:heegaard-diagram} for either case. We will occasionally denote the Heegaard diagram of ${P}_{0,m}$ as $\mathcal{H}(P_{0, m})$ when the sign of $m$ is clear. 
\begin{figure}[h!]
    \centering
    \includegraphics[width=0.95\linewidth]{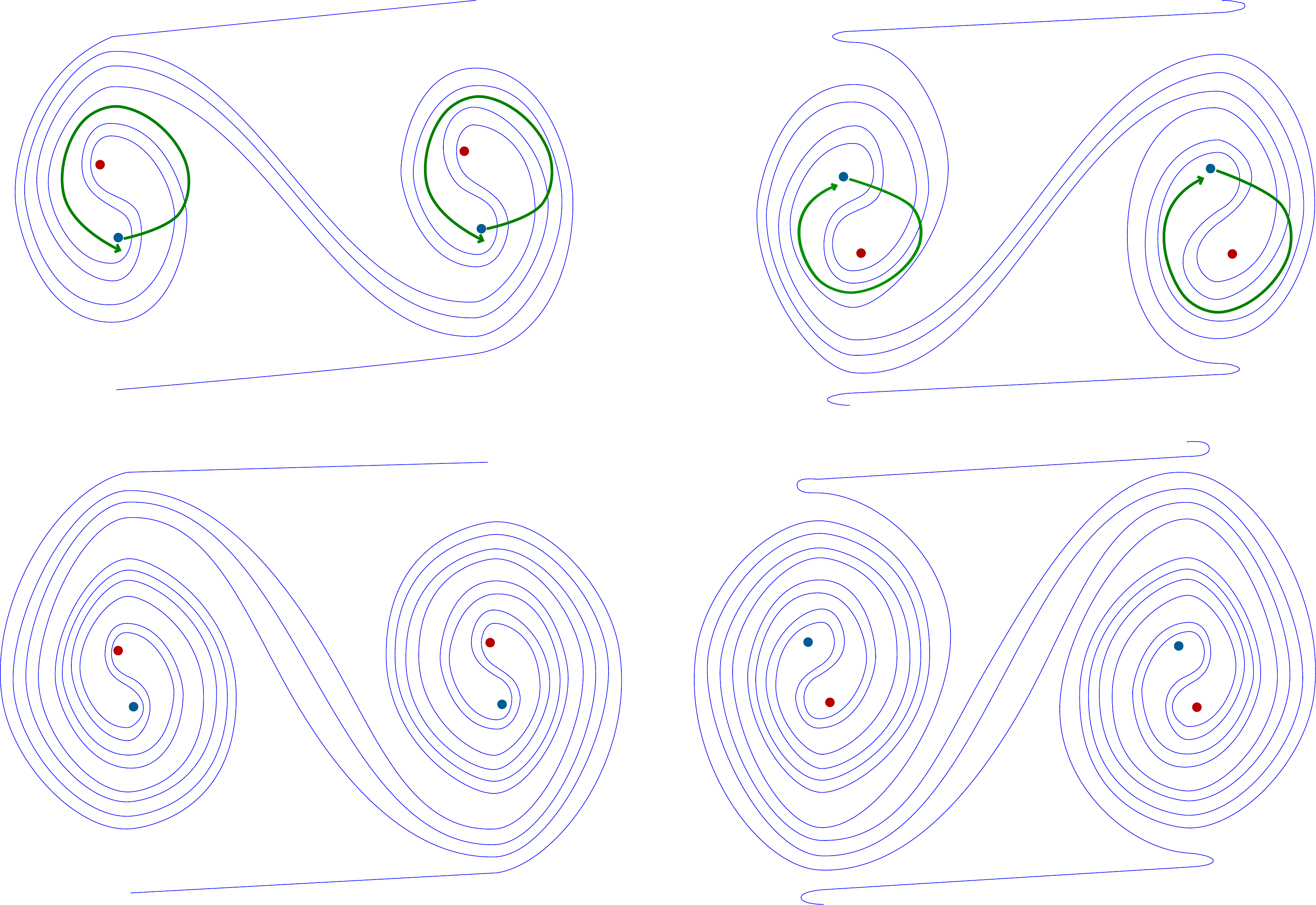}
    \caption{The $\beta$ immersed curve for $m=1$ (top left), $m=2$ (bottom left), $m=-2$ (top right), and $m=-3$ (bottom right). These are horizontal slices; the full curves extend infinitely up and down. We see the bottom curves can be obtained from the top curve by the winding move moving $z$ around $w$ along the green arrows (counterclockwise  if $m\geq 1$, clockwise if $m\leq -2$) and dragging the blue curves along, as described in Section~\ref{section: beta}. Note the dependent region strands for $m\leq -2$ are drawn as they appear in the Heegaard diagram, but they can be isotoped to be ``pulled tight".  }
    \label{fig:beta m 1 2}
\end{figure}

\section{The $\beta$ curve for $P_{0,m}$}\label{section: beta}

To obtain the $\beta$ curve for $P_{0,m}$, we take the universal cover of the Heegaard diagram, and look at a single lift of $l_m \cup l_d$. Doing this for small values of $\abs{m}$, we obtain the immersed curves drawn in Figure~\ref{fig:beta m 1 2}. Notice that visually, increasing $\abs{m}$ looks like the result of ``moving the $z$-basepoint around the $w$-basepoint", either counterclockwise if $m\geq 1$ or clockwise if $m\leq -2$. We will use \emph{winding move} to mean taking the blue {$z$}-basepoint and moving it along the green arrows indicated in Figure~\ref{fig:beta m 1 2}. During this move, the relevant blue $\beta$-arcs move as in a finger-pushing isotopy. For a visual, compare the top images of Figure~\ref{fig:beta m 1 2} with the bottom images of Figure~\ref{fig:beta m 1 2}.

\begin{figure}[h!]
        \centering
        \includegraphics[width=\linewidth]{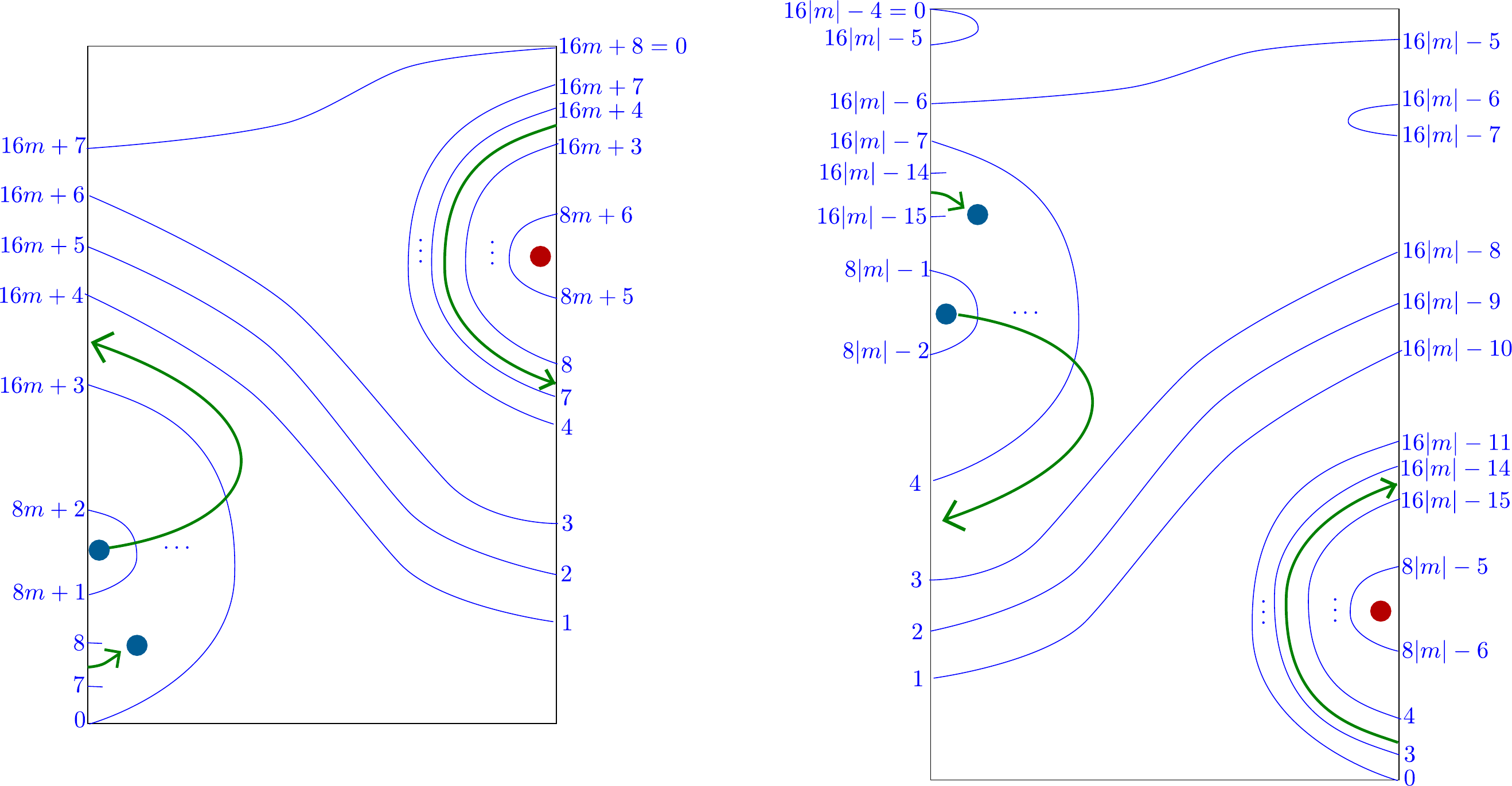}
        \caption{The green arrows represent the path that $z$ traces out as it winds around $w$. Left: $m\geq 1$, right: $m\leq-2$.}
        \label{fig:z-path}
    \end{figure}

To make this rigorous, we verify that the counterclockwise winding move indeed takes the Heegaard diagram for $P_{0,m}$ to the Heegaard diagram for $P_{0, m+1}$ when $m\geq 1$, while the clockwise winding move takes the Heegaard diagram for $P_{0,m}$ to the Heegaard diagram for $P_{0,m-1}$ when $m\leq -2$, and so the shape of the immersed curve moves accordingly.

\begin{theorem}
    Let $\mathcal{H}(P_{0,m})$ be the Heegaard diagram associated to the knot $P_{0, m}$ described in Section~\ref{section: heegaard}. Winding the $z$-basepoint once around the nearest $w$-basepoint (counterclockwise when $m \geq 1$ and clockwise when $m \leq -2$) and pulling along the strands of the $\beta$ curve, we obtain the Heegaard diagram $\mathcal{H'}$ as follows:
    \begin{enumerate}
        \item When $m \geq 1$, $\mathcal{H}(P_{0,m}) = \mathcal{H}(8m+2, 3)$ is sent to $\mathcal{H}'=\mathcal{H}(P_{0,m+1}) = \mathcal{H}(8(m+1)+2, 3)$. 
        \item When $m \leq -2$, $\mathcal{H}(P_{0,m}) = \mathcal{H}(-(8|m|-5), 3)$ is sent to $\mathcal{H}'=\mathcal{H}(P_{0,m-1}) = \mathcal{H}(-(8|m-1|-5), 3)$. 
    \end{enumerate} 
\end{theorem}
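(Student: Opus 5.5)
We would prove this by tracking the combinatorial data $(r,s)$ of the $(1,1)$-diagram through the winding move, together with the dependent region. By \cite[Section 5.1]{Chen-2023} and \cite[Theorem 5.4]{Chen-2023}, a diagram of the form $\mathcal{H}(r,s)$, with a dependent arc chosen so that $(l_m\cup l_d)\cdot\mu=0$ and $(l_m\cup l_d)\cdot\lambda=1$, is determined up to isotopy by $r$ and $s$. Its pattern is then identified via the two-bridge normal form $b(p,q)$ with $q=\operatorname{sgn}(r)(2|r|-1)$ and $p=2s+4|r|$. So it suffices to show two things: the winding move produces a diagram of the same type with $s$ unchanged, and $|r|$ increases by exactly $8$ with its sign preserved. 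The formulas in Section~\ref{section: heegaard} give $r=8m+2$ for $m\ge 0$ and $r=-(8|m|-5)$ for $m<0$, so the target values are precisely $r+8$ for $m\ge1$ and $r-8$ for $m\le -2$, as claimed.

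The first step is to describe the move concretely. The winding move is a finger move (isotopy) of $\beta$ in the punctured torus $T^2\setminus\{z,w\}$, realized by pushing $z$ once around a loop encircling $w$ and dragging any $\beta$-arcs crossing the path. Equivalently, we apply a point-push diffeomorphism to the pair $(T^2,\beta)$. This preserves the homology class of $\beta$ in $T^2$, so the intersection numbers with $\lambda$ and $\mu$ are unchanged and admissibility persists. The relevant comparison is therefore between the resulting diagram and $\mathcal{H}(r\pm 8,3)$, not between the two knots directly.

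Second, we count arcs. Along the green path in Figure~\ref{fig:z-path}, $z$ crosses a fixed collection of $\beta$-arcs near $w$: the $|r|$ rainbows around $w$ and the pieces of the $s=3$ strands adjacent to it. Each crossed arc gets dragged to form a loop around $z$, and this converts it into new rainbows around $z$ and (symmetrically, after isotopy across the vertical edge) around $w$. We would count the intersections of the green path with $\beta$ and check that exactly $8$ new nested rainbows appear around each basepoint. The strands remain three in number because the path only passes near $w$ and never across a slanting strand transversally in a way that changes the slope count. The two signs are handled separately: for $r>0$ (counterclockwise) the strands slant downward, and for $r<0$ (clockwise, with the two small dependent rainbows) they slant upward.

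Third, we would verify that the dependent region is unaffected. This holds either because the path avoids it, or by checking that the computation of $l_m\cdot\mu$ from \cite[Proposition 6.2]{Chen-2023} still yields one dependent strand with the same orientation convention; the sums above are independent of $m$ within each sign range. Then the new diagram agrees with $\mathcal{H}(r\pm8,3)$, which is $\mathcal{H}(P_{0,m\pm1})$.

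The main obstacle is the second step: rigorously showing that the dragged arcs reassemble into exactly the standard rainbow and strand configuration of $\mathcal{H}(r\pm 8,3)$, rather than some isotopic but differently arranged curve. The number $8$ must emerge as $2(2s+1)+2$ or a similar count of arcs crossed. We would first try to verify this by an explicit local picture, pulling arcs tight after the push. If that fails, the fallback is to compute $q/p$ for the new diagram from its arc counts and confirm it equals $\frac{16(m+1)+3}{32(m+1)+14}$ (respectively the $m<0$ fraction), invoking \cite[Theorem 5.4]{Chen-2023}.
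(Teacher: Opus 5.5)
Your reduction is sound as far as it goes: if the pushed diagram is isotopic, rel basepoints, to the standard $\mathcal{H}(r\pm 8,3)$ with its dependent arc, then Section~3 identifies it with $\mathcal{H}(P_{0,m\pm1})$. But that isotopy is the entire content of the theorem, and you only say you would ``count and check'' it. The counting you propose would also not produce the $8$.

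Along the path in Figure~\ref{fig:z-path}, $z$ crosses only the $|r|$ rainbows that enclose it. For $m\geq 1$ these are the top halves of all $8m+2$ of them. It then exits the left edge just below the lowest strand. It travels around $w$ inside the channel between the $w$-rainbows with endpoints $\{8,16m+3\}$ and $\{7,16m+4\}$, crossing no $\beta$-arc, and re-enters the left edge between heights $7$ and $8$. So the number of arcs crossed is $|r|$, which grows with $m$. It is the rainbows enclosing $z$, not those around $w$, that get dragged. Your guess $2(2s+1)+2$ equals $16$, not $8$, for $s=3$.

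The missing idea is a normalization step. Right after the push the diagram is not of the form $\mathcal{H}(r',3)$. For $m\geq 1$ the left edge carries $16m-4$ rainbows enclosing no basepoint and $8m+10$ enclosing $z$. The latter are the $8m+2$ dragged finger tips plus the $8$ original rainbows whose lower endpoints lie below the re-entry point. Meanwhile $24m+6$ rainbows surround $w$.

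You must cancel the surplus explicitly. Isotope the innermost non-basepoint rainbow on the left across the edge and once around $w$. This removes two $w$-rainbows and merges two left-edge rainbows, leaving the count around $z$ unchanged. After $8m-2$ such moves, exactly $|r|+8$ rainbows remain around each basepoint, with strands and dependent arc untouched. For $m\le -2$ the same holds after $8|m|-9$ moves, starting from $16|m|-18$, $8|m|+3$ and $24|m|-15$ rainbows respectively. So the $8$ comes from where the channel around $w$ returns $z$ to the left edge, not from an intersection count.

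Your fallback via $q/p$ does not avoid this. \cite[Theorem 5.4]{Chen-2023} reads $(p,q)$ off the parameters of a diagram already in standard form, so it presupposes exactly the normalization you have not done.
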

    
\begin{proof}
    Consider the case of $m\geq 1$. The corresponding Heegaard diagram is  $\mathcal{H}(P_{0,m})$ with $r = 8 m + 2$ rainbows and $s=3$ strands, and one strand in the dependent region. We lay the Heegaard diagram over a $1\times (16 m + 8)$ grid, where $\beta$ intersects the vertical edges of $\mathcal{H}$ at integer coordinates. This allows us to specify the rainbows and strands by their endpoint coordinates. For the case of $m \leq -2$, the corresponding Heegaard diagram $\mathcal{H}(P_{0,m})$ has $|r| = 8 |m| - 5$ rainbows and $s=3$ strands, also with one strand and two arcs in the dependent region. We lay this over a $1\times (16 |m| - 4)$ grid. See Figure~\ref{fig:heegaard-diagram} for a visual with the indexing.

   On the $m\geq 1$ diagram, the three main region strands have endpoints $(0,16m+3+i)$ and $(1, i)$ for $i=1,2,3$. $8m+2$ rainbows encircle $z$ on the left side of the diagram ($z$ is placed in between heights $8m+1$ and $8m+2$) and $8m+2$ rainbows encircle $w$ on the right side of the diagram ($w$ is placed between heights $8m+5$ and $8m+6$). Finally, the dependent region strand connects $(0, 16m+7)$ and $(1, 16m+8)$.

    On the $m \leq -2$ diagram, the three main region strands have endpoints $(0, i)$ and $(1,16|m|-11+i)$ for $i=1,2,3$. $8|m|-5$  rainbows encircle $z$ on the left side of the diagram ($z$ is placed in between heights $8|m|-2$ and $8|m|-1$ in this case) and $8|m|-5$ rainbows encircle $w$ on the right side of the diagram ($w$ is placed between heights $8|m|-6$ and $8|m|-5$). Finally, the dependent region strand connects $(0, 16|m|-6)$ and $(1, 16|m|-5)$. In order to connect the dependent region strand to the main region arc, include two extra arcs in the dependent region, one with endpoints $(0, 16|m|-5)$ and $(0, 16|m|-4)$ and the other with endpoints $(1, 16|m|-7)$ and $(1, 16|m|-6)$. For a visual of both cases, see Figure~\ref{fig:heegaard-diagram}. 

    We track the winding behavior of $z$ around $w$ in steps (specifically Steps 1, 2, and 3 below), noting newly created rainbows and strands. We apply an isotopy (Step 4) to the resulting curve to simplify, and show it produces the Heegaard diagram for $\mathcal{H}(P_{0, m+1})$ for  $m \geq 1$ and $\mathcal{H}(P_{0, m-1})$ for $m\leq -2$. 

    \textbf{Step 1: The path of $z$ first intersects the left edge of $\mathcal{H}$. } $z$ moves first between the outermost rainbow which intersects the left edge of the Heegaard diagram and the nearest strand which intersects that same edge. In the $m \geq 1$ case, this is between $(0, 16m+3)$ and $(0, 16m+4)$. In the $m \leq -2$ case, this is between $(0, 3)$ and $(0, 4)$. 
    
    As $z$ intersects the left edge, $z$ carries the $\abs{r}$ rainbows surrounding it, creating $\abs{r}$ new rainbows; in the $m \geq 1$ case, this is $8m+2$ new rainbows created via the top-half  of the original $8m+2$ rainbows being ``bent" to intersect the left edge of the Heegaard diagram. In the $m \leq -2$ case, this is $8|m|-5$ rainbows created via the bottom-half of the original $8|m|-5$ rainbows. This is illustrated in Figure~\ref{fig:z-path}. 
    
    \textbf{Step 2: $z$ winds around $w$ on the right side of $\mathcal{H}$.} $z$ re-emerges on the right-edge of the diagram. In the $m\geq 1$ case, $z$ re-emerges between $(1, 16m+3)$ and $(1, 16m+4)$ above $w$, between the top-endpoints of the inner and outermost rainbows. In the $m \leq -2$ case, $z$ emerges between $(1, 3)$ and $(1, 4)$, below $w$, between the bottom-endpoints of the inner and outermost rainbows.
    
    In both cases, $z$ then travels between two rainbows around $w$, intersecting the right-side edge of the graph once again. For $m\geq 1$, this is between $(1, 7)$ and $(1, 8)$. For $m \leq -2$, this is between $(1, 16|m|-15)$ and $(1, 16|m|-14)$. As $z$ moves, the $|r|$ rainbows continue to be dragged by it, resulting in $2\abs{r}$ rainbows around $w$. See Figure~\ref{fig:z-moves-around-w} for a visual of this path and the resulting new rainbows.

    \begin{figure}
        \centering
        \includegraphics[width=0.88\linewidth]{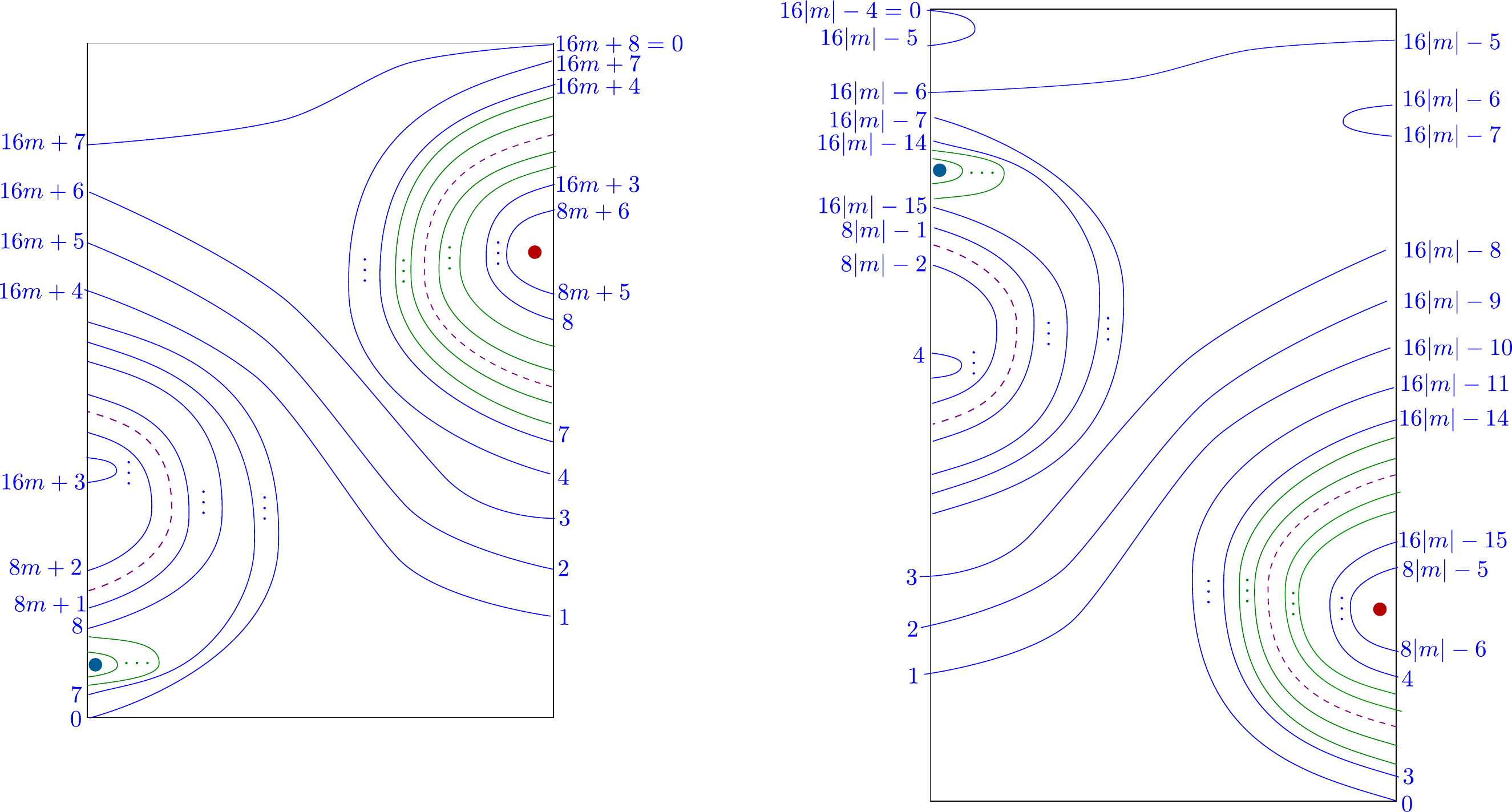}
        \caption{$z$ has finished winding around $w$. The path of $z$ is denoted by the dashed purple line. New rainbows are indicated in green. Left: $m\geq 1$, right: $m\leq -2$.}
        \label{fig:z-moves-around-w}
    \end{figure}

    \textbf{Step 3: $z$ re-emerges on the left side of $\mathcal{H}$.} $z$ re-emerges from the left edge, between $(0, 7)$ and $(0,8)$ for the $m \geq 1$ case and $(0, 16|m|-15)$ and $(0, 16|m|-14)$ in the $m \leq -2$ case. It caries $r$ rainbows around it, so in the $m \geq 1$ this results in an extra $8m+2$ rainbows between $(0, 7)$ and $(0,8)$ and in the $m \leq -2$ case, this is $8|m|-5$ rainbows between $(0, 16|m|-15)$ and $(0, 16|m|-14)$. These new rainbows are drawn in green on Figure~\ref{fig:z-moves-around-w}. This completes the movement of $z$.

    \begin{figure}
        \centering
        \includegraphics[width=0.88\linewidth]{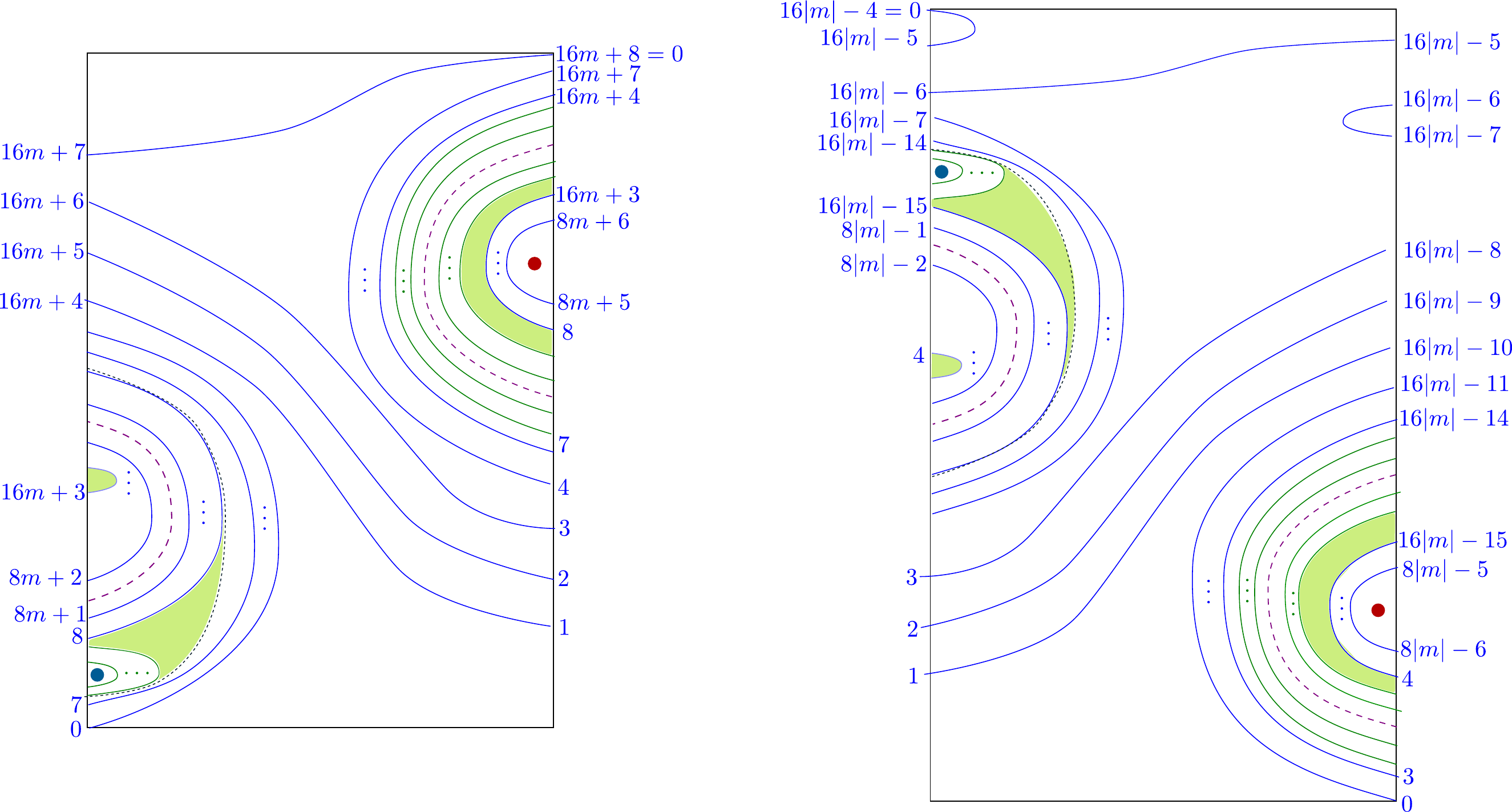}
        \caption{The process of isotoping away extra rainbows, highlighted in green. Left: $m\geq 1$, right: $m\leq -2$.}
        \label{fig:extra rainbows isotopy}
    \end{figure}

    \textbf{Step 4: An isotopy on the resulting $\beta$ curve.} The last step remaining is to isotope away redundant rainbows created by the movement of $z$. See Figure~\ref{fig:extra rainbows isotopy} for a visual of this process. Consider the innermost rainbow on the left edge that does not encircle a basepoint; label it $\mathscr{R}$. The green shaded region in Figure~\ref{fig:extra rainbows isotopy} is the area over which we isotope $\mathscr{R}$, and we describe that process as follows for each case separately. 
    
    In the $m \geq 1$ case the lower end of $\mathscr{R}$ sits at $(0, 16m+3)$. Since $\mathscr{R}$ does not encircle a basepoint, we can isotope it across the left edge where it will re-emerge on the right-hand side above $w$. $\mathscr{R}$ thus connects two rainbows around $w$ -- one with ends $(1, 16 m+3)$ and $(1, 8)$, and the innermost new rainbow created when $z$ wound around $w$ in Step 2. We isotope $\mathscr{R}$ around $w$, getting rid of these two rainbows. Finally, when $\mathscr{R}$ re-emerges on the left-hand side, we will be able to merge two rainbows into one: the blue rainbow with lower end at $(0, 8)$ and the outermost new green rainbow around $z$, added at Step 3. 

    In the $m \leq 2$ case the upper endpoint of $\mathscr{R}$ sits at $(0, 4)$. It re-emerges on the right-hand side below $w$, connecting the two rainbows around $w$ -- one with ends $(1, 4)$ and $(1, 16|m|-15)$, and the innermost new rainbow created when $z$ wound around $w$ in Step 2. The isotopy gets rid of  these two rainbows just as in the positive case. Finally, when $\mathscr{R}$ re-emerges on the left-hand side, we merge two rainbows into one: the blue rainbow with lower end at $(0, 16|m|-15)$ and the outermost new green rainbow around $z$, added at Step 3. 
    
    In both cases, the isotopy eliminates the two rainbows around $w$ as previously described, as well as two rainbows on the left edge (this includes the original rainbow we eliminated, and the blue rainbow that was merged with the outermost green rainbow). However, the number of rainbows encircling $z$ did not change -- only one was modified, but it remained encircling $z$ after merging. We can now repeat this isotopy move with the next-innermost rainbow on the left side that does not encircle a basepoint. 

    \textbf{Step 5: Counting the number of rainbows and strands remaining.} For $m\geq 1$, we started Step 4 with a total of $16m+3-7 = 16m-4$ rainbows that do not encircle a basepoint on the left edge, $8m+10$ rainbows that do encircle $z$ ($8m+2$ green ones added during Step 3, and $8$ blue rainbows with one end below $z$'s final location). On the right side, we ended with $24m+6$ rainbows around $w$ after Step 3: $8m+2$ original rainbows and $16m+4$ rainbows added during the winding in Step 2. Repeating the isotopy move $8m-2$ times, we eliminate all $16m-4$ non-basepoint rainbows on the left and eliminate $16m-4$ added rainbows around $w$. 
    
    This leaves us with $8m+10$ rainbows around $z$ and $w$ each. The isotopy moves did not touch the strands in either the main or dependent regions, so we have produced a Heegaard diagram containing $r = 8m+10=8(m+1)+2$ rainbows around each basepoint, $s = 3$ strands, and one dependent region strand -- exactly $\mathcal{H}(P_{0,m+1})$  for $m \geq 1$.

    For the $m \leq -2$ case, we started Step 4 with a total of $16|m|-18$ rainbows that do not encircle a basepoint on the left edge, $8|m|+3$ rainbows that do encircle $z$ ($8|m|-5$ rainbows dragged along with $z$ depicted in green, and $8$ rainbows around $z$ in blue that originally existed around it). On the right side, we had $8|m|-5$ original (blue) rainbows and $16|m|-10$ added (green) around $w$, for a total of $24|m|-15$ at the end of Step 3. Repeating the isotopy move $8|m|-9$ times, we eliminate all $16|m|-18$ non-basepoint rainbows on the left, retain the $8|m|+3$ rainbows around $z$, and eliminate $16|m|-18$ rainbows around $w$ added in Step 2. 
    
    This leaves us with $8|m|+3$ rainbows around each $z$ and $w$ respectively. The isotopy again did not touch the strand or arcs in the dependent region, so those are preserved. This algorithm thus produced a Heegaard diagram containing $|r| = 8(|m|+1)-5$ rainbows and $s=3$ strands, and a single strand in the dependent region with the corresponding rainbows -- exactly $\mathcal{H}(P_{0, m-1})$ for $m \leq -2$, as desired.
\end{proof}

Visualizing these diagrams now on a universal cover, we see that if we restrict ourselves to one copy of the immersed curve $\beta$, the winding move acts on $\beta$ by making the curve wind further around the basepoints, one additional time for each winding move. See Figure~\ref{fig:beta m 1 2} for the immersed $\beta$ curves corresponding to $m=1,2$ and $m= -2, -3$. 

\section{The pairing diagram and computing $\abs{\varepsilon}$} \label{section: pairing}

We have created the $\beta$ and $\alpha_K$ curves for the pattern and companion knot respectively; we now create the pairing diagram by superimposing them as described in \cite{Chen-2023} and \cite{chen-hanselman-2026-immersed}. After isotoping the curves to minimize intersections between them, the remaining intersection points will be generators of the $\mathit{gCFK}^-(P_{0,m}(K_{[n]}))$. See also \cite{Levine-nonsurjective} for a different method of computing $\varepsilon$ of satellite knots using bordered Heegaard Floer homology.
\begin{figure}[h!]
    \centering
    \includegraphics[width=0.7\linewidth]{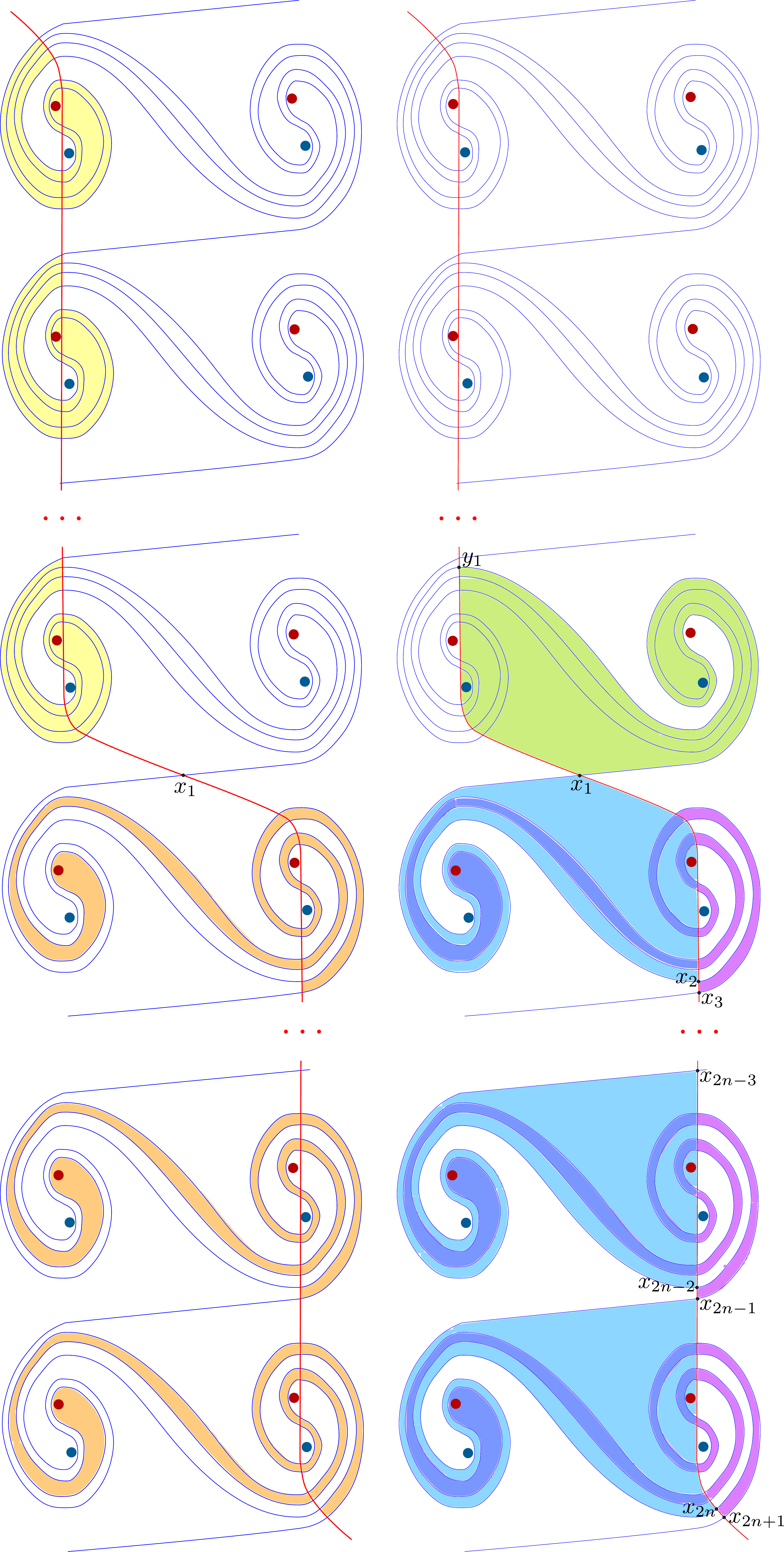}
    \caption{The immersed curves for $P_{0,m}(K_{[n]})$ when $m=1$, $\varepsilon(K)=0$, and $n>0$. On the left are the disks eliminated when eliminating generators with minimal filtration difference. On the right are the disks contributing the relevant differentials in $\mathit{CFK}^-_\mathcal{R}(P_{0,m}(K_{[n]}))$.}
    \label{fig:our case - all disks}
\end{figure}

\begin{figure}[h!]
    \centering
    \includegraphics[width=0.7\linewidth]{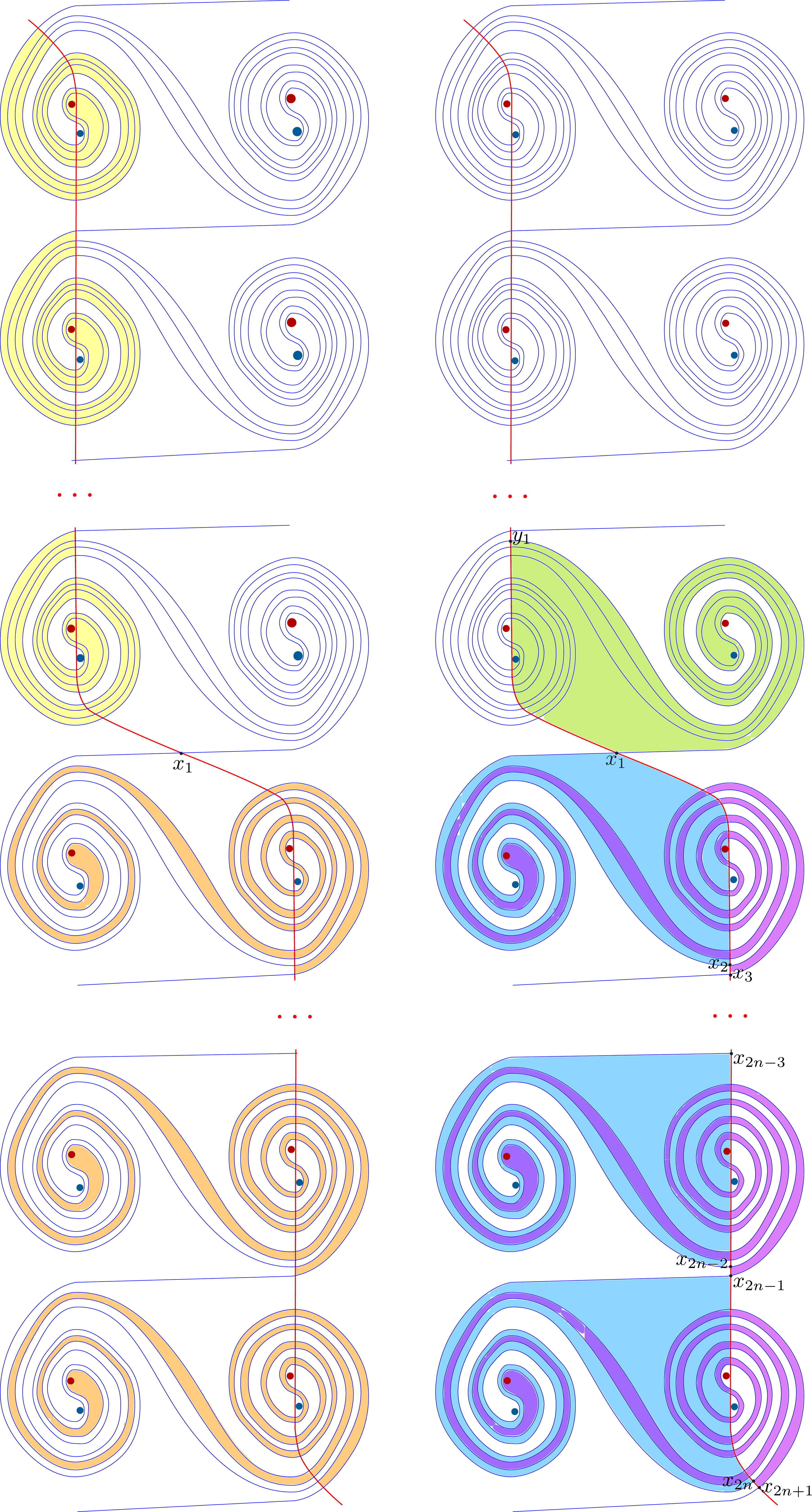}
    \caption{The immersed curves for $P_{0,m}(K)$ when $m=2$, $\varepsilon(K)=0$, and $K$ has framing $n>0$. On the left are the disks eliminated when eliminating generators with minimal filtration difference. On the right are the disks contributing the relevant differentials in $\mathit{CFK}^-_\mathcal{R}$. We observe the same sub-structure as in Figure~\ref{fig:our case - all disks}, where $m=1$.}
    \label{fig:all disks - m - 2}
\end{figure}

\subsection{The case $\varepsilon(K)=0, m\geq 1, n>0$}\label{subsection: case 1}

We begin with the $m\geq 1$ case, point (1) of Theorem~\ref{main theorem}. In this  case, the immersed curves lie as in Figure~\ref{fig:our case - all disks}. We first eliminate disks with minimal filtration difference, which are those shown in yellow and orange on the left side of Figure~\ref{fig:our case - all disks}. We notice that if $m$ increases by $k$, the identified disks will wind around $w$ an additional $k$ times
(see Figure~\ref{fig:all disks - m - 2} for the $m=2$ case), but their elimination will still get rid of all but one intersection point. Thus for any $m\geq 1$, after removing generators with minimal filtration difference, we are left with a single generator, denoted $x_1$ in Figures~\ref{fig:our case - all disks} and \ref{fig:all disks - m - 2} -- specifically, the point invariant under hyperelliptic involution.

To find $\varepsilon$, we consider the chain complex $\mathit{CFK}^-_\mathcal{R}(P_{0,m}(K_{[n]}))$ over the ring $\mathcal{R}=\mathbb{F}_2[U,V]/UV$. We know that after restricting to the horizontal subcomplex, i.e. setting $V=0$, the $\mathbb{F}_2[U]$-summand generator will be part of the subcomplex containing $x_1$; this generator's vertical situation will determine $\varepsilon(P_{0,m}(K_{[n]}))$ (this follows from \cite{chen-hanselman-2026-immersed} and \cite{Chen-2023}; see also \cite[Section 3.2]{Patwardhan-Xiao} for a similar computation). In our case the relevant differentials are as follows (see the right side of Figure~\ref{fig:our case - all disks} for the labeling of the generators and the disks involved):
\[\begin{tikzcd}[cramped,column sep=small]
	{x_2} && {x_1} && {y_1} \\
	{x_4} && {x_3} \\
	\dots && \dots \\
	{x_{2n}} && {x_{2n-1}} \\
	&& {x_{2n+1}}
	\arrow["{U^2}", from=1-1, to=1-3]
	\arrow["U", from=1-1, to=2-3]
	\arrow["{V^2}"', from=1-5, to=1-3]
	\arrow["{U^2}", from=2-1, to=2-3]
	\arrow["U", from=2-1, to=3-3]
	\arrow["U", from=3-1, to=4-3]
	\arrow["{U^2}", from=4-1, to=4-3]
	\arrow["U", from=4-1, to=5-3]
\end{tikzcd}\]
We see that the $\mathbb{F}_2[U]$-summand generator of the horizontal complex is $x_1$, as it can be isolated by a change of basis (specifically adding $U^jx_1$ to generator $x_{2j+1}$ for $j=1, \dots, n$). Thus, the vertical situation of $x_1$ determines $\varepsilon(P_{0,m}(K_{[n]}))$. In our case, we see that there is a $V^2$ disk from $y_1$ into $x_1$; importantly, this disk is unaffected by increasing $m$ via the winding move. Thus, we have an incoming $V$-disk into $x_1$, so 
\[\varepsilon(P_{0,m}(K_{[n]}))=1\]
Note that the same structure of disks is present when we increase $m$ -- it is not affected by the winding move (compare Figures~\ref{fig:our case - all disks} and \ref{fig:all disks - m - 2}). Our conclusion thus holds for all $m\geq 1$. 

\subsection{The case $\varepsilon(K)=0, m\leq -2, n>0$}\label{subsection: case 2}
In this case, the immersed curves lie as in Figure~\ref{fig:immersed curves m neg n pos}. We begin by eliminating the disks with minimal filtration difference shown on the left side of the figure and obtain $x_1$ as the last remaining generator. Again, if we apply the winding move described in Section~\ref{section: beta} to the $\beta$ curve, removing the corresponding disks will still leave exactly one generator, which will occupy the same position as $x_1$. 

\begin{figure}
    \centering
    \includegraphics[width=0.75\linewidth]{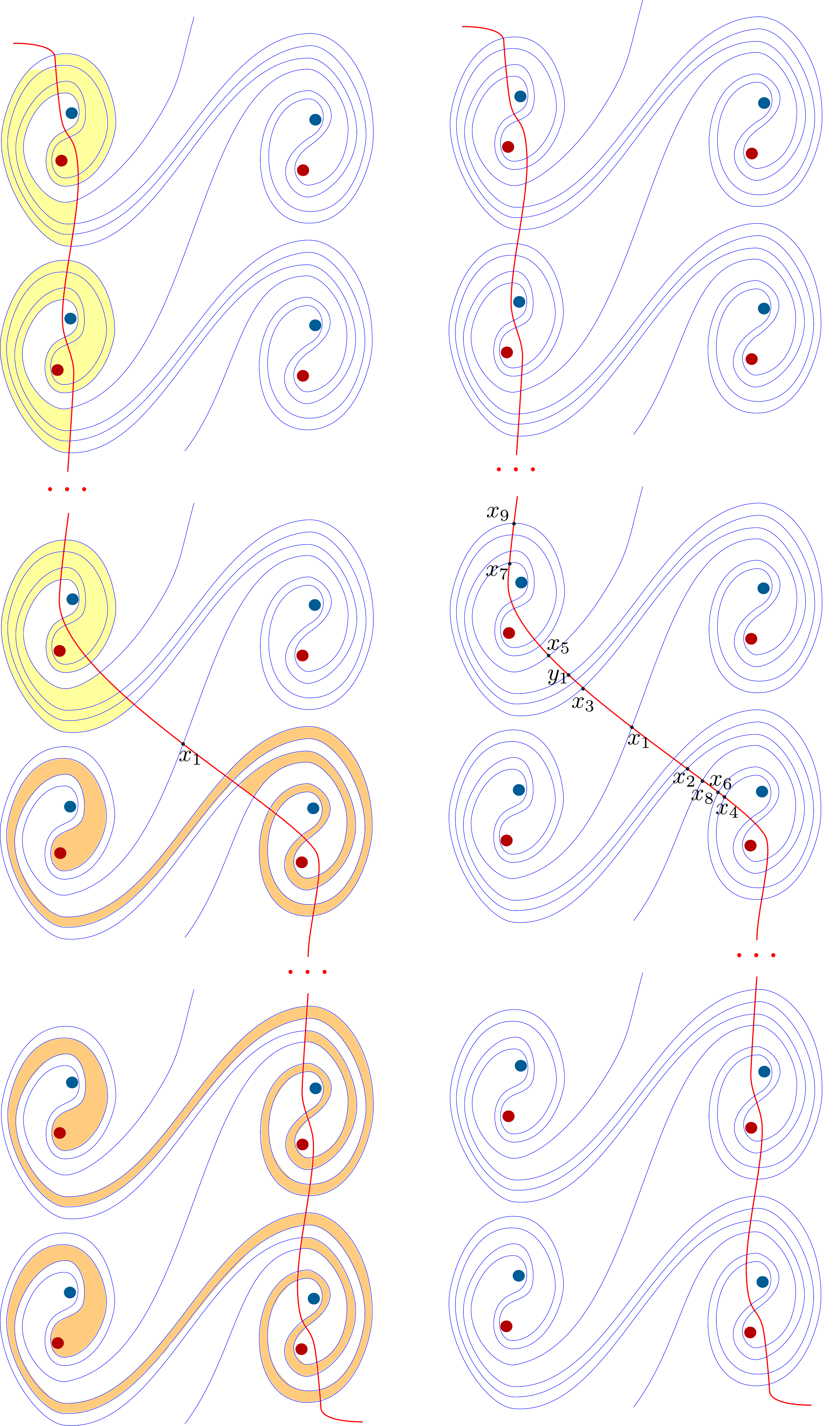}
    \caption{The immersed curves when $\varepsilon(K)=0, m=-2, n>0$. Left: the minimal filtration difference disks isotoped away to get $x_1$ as the remaining generator; right: the generators contributing to relevant differentials of $\mathit{CFK}^-_\mathcal{R}(P_{0,m}(K_{[n]}))$. If we increase $\abs{m}$, i.e. apply the winding move described in Section~\ref{section: beta} to the disks on the right picture, all these disks still exist and form the same sub-structure in $\mathit{CFK}^-_\mathcal{R}(P_{0,m}(K_{[n]}))$.}
    \label{fig:immersed curves m neg n pos}
\end{figure}

To find the value of $\varepsilon$, we consider the subcomplex of $\mathit{CFK}^-_\mathcal{R}(P_{0,m}(K_{[n]}))$ containing $x_1$. The differentials relevant to us are as follows (see the right side of Figure~\ref{fig:immersed curves m neg n pos} for the locations of the generators):
\[\begin{tikzcd}[cramped]
	&& {y_1} &&& \\
	{x_2} && {x_1} & {x_3} & {x_5} \\
	& {x_4} &&& {x_9} & {x_7} \\
	{x_8} && {x_6}
	\arrow["V", from=1-3, to=2-3]
	\arrow["U", from=2-1, to=2-3]
	\arrow["U", from=2-1, to=3-2]
	\arrow["U", from=2-1, to=4-1]
	\arrow["U"', from=2-4, to=2-3]
	\arrow["U", from=2-4, to=2-5]
	\arrow["U", from=2-4, to=3-5]
	\arrow["U", from=2-5, to=3-6]
	\arrow["U", from=3-2, to=4-3]
	\arrow["U", from=3-5, to=3-6]
	\arrow["U", from=4-1, to=4-3]
\end{tikzcd}\]

We see that after an appropriate change of basis (specifically adding $x_1$ to $x_4$ and $x_5$), we obtain that $x_1$ is the $\mathbb{F}_2[U]$-summand generator. Its vertical situation thus determines $\varepsilon$; we see $x_1$ has an incoming $V$-disk from $y_1$, so 
\[\varepsilon(P_{0,m}(K_{[n]}))=1\]

Once again, increasing $\abs{m}$ by applying the winding move described in Section~\ref{section: beta} does not affect this disk structure, so this conclusion holds for all $n > 0, m\leq -2$. 

\subsection{The case $\varepsilon(K)=0, m\leq -2, n<0$} \label{subsection: case 3}
\begin{figure}
    \centering
    \includegraphics[width=0.75\linewidth]{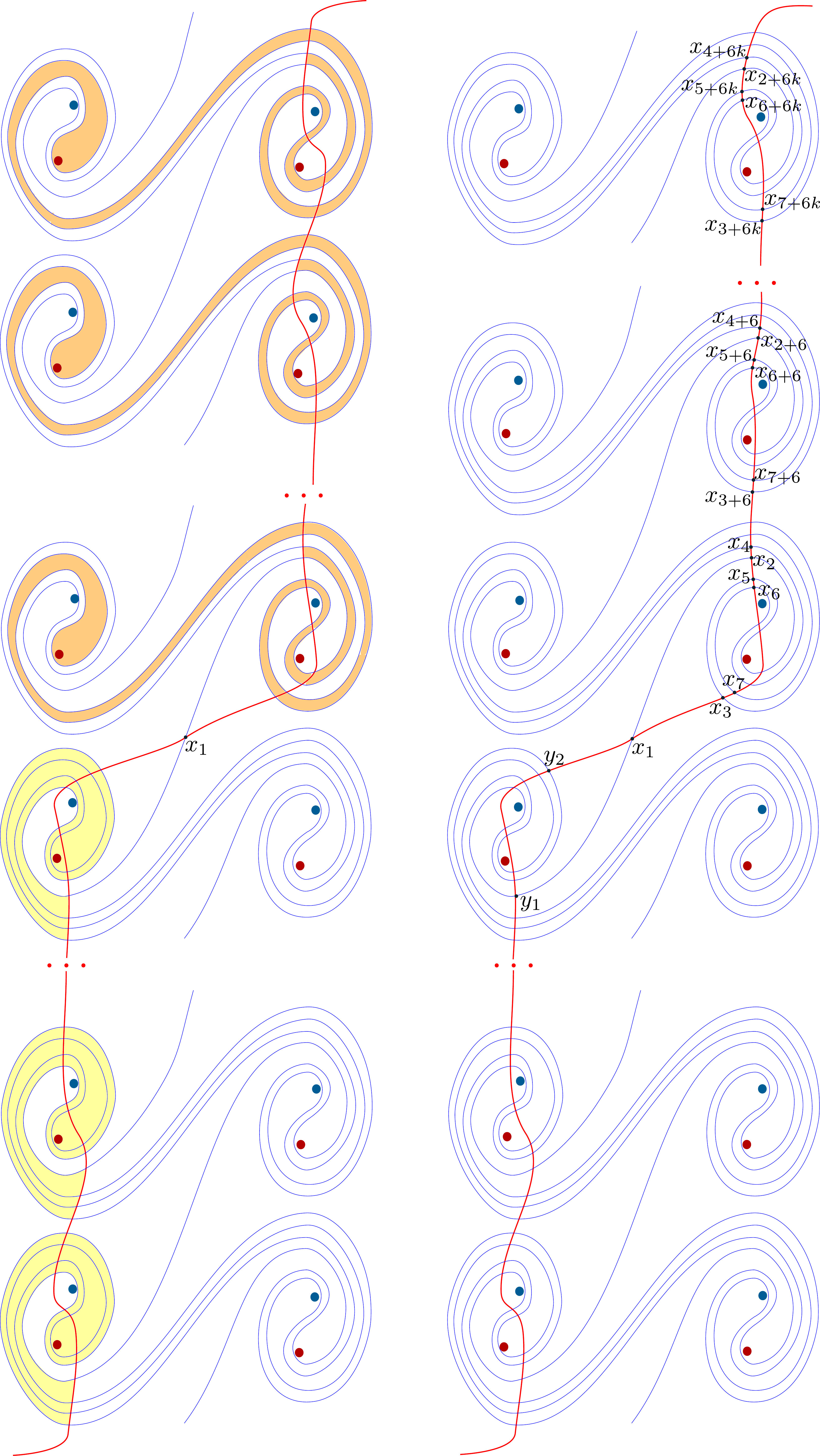}
    \caption{The immersed curves when $\varepsilon(K)=0, m=-2, n<0$. Left: the minimal filtration difference disks isotoped away to get $x_1$ as the remaining generator; right: the generators contributing to relevant differentials of $\mathit{CFK}^-_\mathcal{R}(P_{0,m}(K_{[n]}))$. If we increase $\abs{m}$, i.e. apply the winding move described in Section~\ref{section: beta} to the disks on the right picture, all these disks still exist and form the same sub-structure in $\mathit{CFK}^-_\mathcal{R}(P_{0,m}(K_{[n]}))$.}
    \label{fig:immersed curves m neg n neg}
\end{figure}
In this case, the immersed curves lie as in Figure~\ref{fig:immersed curves m neg n neg}. After isotoping away the disks with minimal filtration shown on the left side of the figure, we are left with the generator $x_1$. Computing the relevant differentials of $\mathit{CFK}^-_\mathcal{R}(P_{0,m}(K_{[n]}))$, we obtain the following structure:
\[\begin{tikzcd}[cramped,column sep=small]
	{y_2} & {y_1} &&&&&&& \\
	{x_3} & {x_1} & {x_2} \\
	{x_7} & {x_5} & {x_4} & {x_{2+6}} & {x_{4+6}} & \dots & {x_{4+6(k-1)}} & {x_{2+6k}} & {x_{4+6k}} \\
	& {x_6} & {x_{3+6}} & {x_{5+6}} & {x_{6+6}} & \dots & {x_{3+6k}} & {x_{5+6k}} & {x_{6+6k}} \\
	&&& {x_{7+6}} &&&& {x_{7+6k}}
	\arrow["U"', from=2-1, to=3-2]
	\arrow["V"', from=2-2, to=1-1]
	\arrow["V"', from=2-2, to=1-2]
	\arrow["U"', from=2-2, to=2-1]
	\arrow["U", from=2-2, to=2-3]
	\arrow["U", from=2-3, to=3-2]
	\arrow["U", from=3-1, to=2-1]
	\arrow["U"', from=3-1, to=4-2]
	\arrow["U"', from=3-3, to=2-3]
	\arrow["{U^2}", from=3-3, to=3-4]
	\arrow["U", from=3-3, to=4-2]
	\arrow["{U^2}", from=3-3, to=4-3]
	\arrow["U", from=3-4, to=4-4]
	\arrow["U"', from=3-5, to=3-4]
	\arrow["{U^2}", from=3-5, to=3-6]
	\arrow["U", from=3-5, to=4-5]
	\arrow["{U^2}", from=3-5, to=4-6]
	\arrow["U"', from=3-7, to=3-6]
	\arrow["{U^2}", from=3-7, to=3-8]
	\arrow["U"', from=3-7, to=4-6]
	\arrow["{U^2}", from=3-7, to=4-7]
	\arrow["U", from=3-8, to=4-8]
	\arrow["U"', from=3-9, to=3-8]
	\arrow["U", from=3-9, to=4-9]
	\arrow["U", from=4-2, to=3-2]
	\arrow["U", from=4-3, to=4-4]
	\arrow["U"', from=4-5, to=4-4]
	\arrow["U", from=4-7, to=4-8]
	\arrow["U"', from=4-9, to=4-8]
	\arrow["U", from=5-4, to=4-3]
	\arrow["U"', from=5-4, to=4-5]
	\arrow["U", from=5-8, to=4-7]
	\arrow["U"', from=5-8, to=4-9]
\end{tikzcd}\]
We see that the $\mathbb{F}_2[U]$-summand generator is $x^*:=x_1+x_4+x_7+\sum_{j=1}^{k} U^j (x_{4+6j}+x_{7+6j})$. Indeed, $\partial_{\text{horz}} x^*=0$, and $x^*$ does not appear in the image of $\partial_{\text{horz}}$ since there are no incoming disks into $x_1$. Moreover, $x_1$ has outgoing $V$-disks into $y_1$ and $y_2$, $x_1$ has no incoming $V$-disks, and no other $x_i$ have $V$-disks into $y_1$ or $y_2$, so we see that $x^*$ will have an outgoing $V$-disk and no incoming $V$-disk. Hence, 
\[\varepsilon(P_{0,m}(K_{[n]})) = -1\]

Again, all the disks described keep the same structure under the winding move described in Section~$\ref{section: beta}$, so our conclusion holds for all $n<0, m\leq -2$.

\subsection{Proof of Theorem~\ref{main theorem}}

The result follows directly from subsections \ref{subsection: case 1}, \ref{subsection: case 2}, and \ref{subsection: case 3}.\qed

\section{Conclusion}\label{section: conclusion}

\subsection*{Proof of Corollary~\ref{main corollary}}
The shaded band in Figure~\ref{fig: pnm qnm} shows $Q_{n,m}$ is concordant to the longitude in $S^1\times D^2$. We thus have that $Q_{n,m}(K)$ is concordant to $K$, so since $\varepsilon$ is a concordance invariant, when $\varepsilon(K)=0$ we obtain
\begin{align*}
    \varepsilon(Q_{n,m}(K))=\varepsilon(K)=0
\end{align*}

On the other hand, in Section~\ref{section: pairing}, we showed that when $n>0$ and $m\geq 1$,
\begin{align*}
    \varepsilon(P_{n,m}(K))=\varepsilon(P_{0,m}(K_{[n]}))=1
\end{align*}
When $n>0$ and $m\leq -2$, we obtained
\begin{align*}
    \varepsilon(P_{n,m}(K))=\varepsilon(P_{0,m}(K_{[n]}))=1
\end{align*}
and when $n<0$ and $m\leq -2$, we obtained 
\begin{align*}
    \varepsilon(P_{n,m}(K))=\varepsilon(P_{0,m}(K_{[n]}))=-1
\end{align*}
By \cite[Theorem 1.2]{homwan2026}, if two knot $n$-traces are diffeomorphic, the absolute values of the $\varepsilon$ invariants of the respective knots must be the same. In all our cases, $\abs{\varepsilon(P_{n,m}(K))}\neq\abs{\varepsilon(Q_{n,m}(K))}$, so we conclude that Corollary~\ref{main corollary} holds.\qed

\subsection*{Further directions} 

A natural next question is whether immersed curves method gives new information about other cases, namely $\varepsilon(K) \neq 0$. 

Using \cite[Table 1]{homwan2026}'s summary of \cite{Bodish-twisted-Mazur}'s $\tau$ and $\varepsilon$ results for $P_{n,0}(K)$ and the relations between the concordance invariants $\tau, \nu, \varepsilon$ (originally in \cite{ozsvath2003knot},  \cite{hom2014bordered}, see \cite[Section 2]{homwan2026} for a convenient list of the properties), we build the following table:

\begin{table}[h!]
    \centering
    \begin{tabular}{c|c|c|c}
         & $\varepsilon(K) = -1$ & $\varepsilon(K) = 1, n<2\tau(K)$ & $\varepsilon(K) = 1, n\geq 2\tau(K)$ \\
         \hline
        $\tau(P_{n,0}(K))$          &  $\tau(K)+1$  &  $\tau(K)+1$  & $\tau(K)$\\
        $\varepsilon(P_{n,0}(K))$   &  $1$          &  $1$          & $1$ \\
        $\nu(P_{n,0}(K))$           &  $\tau(K)+1$  &  $\tau(K)+1$  & $\tau(K)$  \\
         \hline
         $\tau(Q_{n,0}(K))$         &  $\tau(K)$    & $\tau(K)$     & $\tau(K)$\\
         $\varepsilon(Q_{n,0}(K))$  &  $-1 $        & $1$           & $1 $ \\
         $\nu(Q_{n,0}(K))$          &  $\tau(K)+1$  & $\tau(K)$     & $\tau(K)$ \\
    \end{tabular}
    \caption{Values of concordance invariants for $P_{n,0}(K)$ and $Q_{n,0}(K)$.}
    \label{tab:placeholder} 
\end{table}

We build a similar table for the $\varepsilon(K)=0$ cases. The first column was computed in \cite{homwan2026} and the other columns are the cases of Theorem~\ref{main theorem}, for which we use the immersed curves method. Note that by work of \cite[Section 4]{Chen-2023}, the $\tau$ invariant of a satellite knot corresponds to the Alexander grading of the generator we denoted $x_1$, the last remaining generator after isotoping away disks with minimal filtration difference. In all of our cases, $x_1$ is the point invariant under hyperelliptic involution, so its Alexander grading is zero and therefore $\tau(P_{n,m}(K))=0$. We computed the $\varepsilon(P_{n,m}(K))$ invariants above, and can then use properties of $\nu$ to complete the table. We thus obtain the following: 

\begin{table}[h!]
    \centering
    \begin{tabular}{c|c|c|c|c}
         &$m\geq0, n<0$ & $m\leq-2, n<0$& $m\geq 0, n>0$   & $m\leq -2, n>0$ \\
         \hline
         $\tau(P_{n,m}(K))$ &  $1$&  $0$&  $0$&$0$\\
         $\varepsilon(P_{n,m}(K))$&  $1$ &  $-1$ & $1$ &$1$\\
         $\nu(P_{n,m}(K))$& $1$  &  $1$ &$0$ & $0$ \\
         \hline
         $\tau(Q_{n, m}(K))$ &  $0$ & $0$ & $0$&$0$\\
         $\varepsilon(Q_{n, m}(K))$ &  $0$ & $0$ &$0$ &$0$\\
         $\nu(Q_{n, m}(K))$&  $0$ &$0$ &$0$ &$0$\\
    \end{tabular}
    \caption{Values of concordance invariants for $P_{n,m}(K)$ and $Q_{n,m}(K)$ when $\varepsilon(K)=0$, $n\neq0$, $m\neq -1$.}
    \label{tab:epsilon 0} 
\end{table}
We see that when $\varepsilon(K)=0$ and $m\neq -1$, the knot trace invariants $\nu$, $\abs{\varepsilon}$ do not seem to depend on the value of $m$. We posit that this holds more generally:
\begin{conjecture}
    Let $K$ a knot in $S^3$ and $n$ any integer. If $m,m'$ are any integers such that $m \neq -1$ and $m' \neq -1$, then \begin{align*}
        \nu(P_{n,m}(K))=\nu(P_{n,m'}(K)), \quad \abs{\varepsilon(P_{n,m}(K))}=\abs{\varepsilon(P_{n,m'}(K))},\\
        \nu(Q_{n,m}(K))=\nu(Q_{n,m'}(K)), \quad \abs{\varepsilon(Q_{n,m}(K))}=\abs{\varepsilon(Q_{n,m'}(K))}
    \end{align*}
\end{conjecture}

If this conjecture holds, then the $\nu$ and $\abs{\varepsilon}$ values from Table~\ref{tab:placeholder} hold for any $m\neq -1$, rather than only for $m=0$. In that case, these knot concordance invariants will not help distinguish the smooth structures on $X_n(P_{n,m}(K))$ and $X_n(Q_{n,m}(K))$ when $\varepsilon(K)=-1$ or $\varepsilon(K)=1, n\geq 2\tau(K)$. Hom and Wan showed the two $n$-traces form an exotic pair when $\varepsilon(K)=1, n<2\tau(K)$, and $m\geq 0$ (\cite[Theorem 1.3]{homwan2026}). 

The remaining case is $\varepsilon(K)=1, n<2\tau(K), m\leq -2$. This may be achievable with immersed curves, but would require computing $\tau(P_{n,m}(K))$ and $\nu(P_{n,m}(K))$, as the $\abs{\varepsilon}$ invariant seems to be insufficient. 


\clearpage
\bibliographystyle{alpha}
\bibliography{references}
\end{document}

%% file: preamble.tex
\usepackage{graphicx} 
\usepackage{geometry} 
\usepackage{amsmath}
\usepackage{amsthm}
\usepackage{amsfonts}
\usepackage{mdframed}
\usepackage{xcolor}
\usepackage{physics}
\usepackage{tikz}
\usepackage{tikz-cd}
\usepackage[shortlabels]{enumitem}
\usepackage{svg}
\usepackage{mathtools}
\usepackage[makeroom]{cancel}
\usepackage{mathrsfs}

 \usepackage{color}
\usepackage{xcolor}
\definecolor{deepblue}{RGB}{0,0,130}

\usepackage{hyperref}
\hypersetup{
    colorlinks=true,
    citecolor=deepblue,
    linkcolor=deepblue
}

\theoremstyle{definition} 

\newtheorem{theorem}{Theorem}[section]
\newtheorem{conjecture}[theorem]{Conjecture}

\newtheorem{corollary}[theorem]{Corollary}
\newtheorem{remark}[theorem]{Remark}

